\newtheorem{teo}{Theorem}[section]
\newtheorem{propo}[teo]{Proposition}
\newtheorem{cor}[teo]{Corollary}
\newtheorem{lem}[teo]{Lemma}
\newtheorem{defin}[teo]{Definition}
\newtheorem{obs}[teo]{Remark}
\numberwithin{equation}{section}
\newtheorem{ej}[teo]{Example}
\begin{document}

\title[Continuity    Topological Entropy of Non-stationary Dynamical Systems]{On the Continuity of the  Topological Entropy of Non-autonomous Dynamical Systems}

\author[J. Muentes]{Jeovanny de Jesus Muentes Acevedo}
\address{Instituto de Matem\'atica e Estat\'istica\\ Universidade de S\~ao Paulo\\
05508-090, Sao Paulo, Brazil}
\email{jeovanny@ime.usp.br}

\date{2017}

\begin{abstract} Let $M$ be a compact Riemannian manifold.  The set $\text{F}^{r}(M)$ consisting of sequences $(f_{i})_{i\in\mathbb{Z}}$ of $C^{r}$-diffeomorphisms on $M$ can be endowed with the compact topology or with the strong topology. A notion of topological entropy is given for these   sequences.   I will prove this entropy is discontinuous at each sequence  if we consider the compact topology on $\text{F}^{r}(M)$. On the other hand, if $ r\geq 1$ and we consider the strong topology on $\text{F}^{r}(M)$, this entropy is a continuous map.       
\end{abstract}

\subjclass[2010]{37A35;  	37B40;   	37B55}

\keywords{topological entropy, strong topology,   non-autonomous dynamical systems, non-stationary dynamical systems}

\maketitle

\section{Introduction}
In 1965, R. L. Adler, A. G. Konheim and M. H. McAndrew introduced the topological entropy
of a continuous map $\phi : X\rightarrow X$  on a compact topological space X via open covers of $X$.  
Roughly, topological entropy   is the exponential growth rate of the number of essentially different orbit segments of length $n$.  
In 1971,   R. Bowen defined the topological entropy   of a uniformly continuous map  	 on an arbitrary metric space   via  spanning
and separated sets, which, when the space is compact,   it coincides with the topological entropy as
defined by Adler, Konheim and McAndrew.
Both definitions can be found in  \cite{Walters}.

\medskip

Let $M$ be a compact metric space. Let     $\textbf{\textit{f}}=(f_{i})_{i\in\mathbb{Z}}$ be a sequence of homeomorphisms defined on $M.$   The $n$-th composition is defined, for each   $i\geq 1$,    as  \[\textbf{\textit{f}}_{ i} ^{n}=  f_{i+n-1}\circ \cdots\circ f_{i} \quad\text{and}\quad \textbf{\textit{f}}_{ i} ^{-n}=f_{i-n}^{-1}\circ \cdots\circ f_{i-1}^{-1}:M_{i}\rightarrow M_{i-n} , \quad n\geq 0.\] 
This notion is known as    \textit{non-stationary dynamical systems} or \textit{non-autonomous dynamical systems} (see \cite{alb}, \cite{Kloeden}, \cite{Kolyada}).  
S. Kolyada and L. Snoha, in \cite{Kolyada}, introduced a notion of topological entropy for this type of dynamical systems, which generalizes the notion of entropy for single dynamical systems.  They only considered    sequences of type $(f_{i})_{i\geq0}$ and the entropy for this sequence is a single number (possibly $+\infty$).   Naturally,   this idea can be extended to two-sided sequences   $(f_{i})_{i\in\mathbb{Z}}$.   We will consider   sequences of type $(f_{i})_{i\in\mathbb{Z}}$ because, since each $ f_ {i}$ is a homeomorphism, we can compute another entropy for the same sequence by considering the composition of the inverse of each $f_{i}$ for $i\rightarrow -\infty$ (see Remark \ref{obsentropiainversa}).

 \medskip
 
 Firstly,  the entropy of a non-autonomous dynamical system $(f_{i})_{i\in\mathbb{Z}}$ will be defined as a sequence of non-negative numbers $ (a_{i})_{i \in \mathbb{Z}} ,$ where each $a_{i}$ depends only on $f_{j}$ for $j\geq i.$   Then we will see that    $ (a_{i})_{i \in \mathbb{Z}}$ is a constant sequence (see Corollary \ref{corolarioigualdad}).   Consequently, this common number will be considered as the entropy of $(f_{i})_{i\in\mathbb{Z}}$.  As a consequence, we  will also see  the entropy of a non-autonomous dynamical system can be considered as the topological entropy of a single homeomorphism defined on the union disjoint of   infinitely many copies of $ M $ (see Remark \ref{remarkunmap}).

\medskip

     Let $\text{F}^{r}(M)$ be the set consisting of families $(f_{i})_{i\in\mathbb{Z}}$ of $C^{r}$-diffeomorphisms on $M$, where $M$ is a compact Riemannian manifold. $\text{F}^{r}(M)$ can be endowed with the compact topology and the strong topology (see Definitions \ref{producttopo} and \ref{strongtopo}). In this paper I will show that, if $r\geq1$ and if we consider the strong topology on $\text{F}^{r}(M)$, the entropy depends   continuously  on each sequence in $\text{F}^{r}(M)$.    In contrast, with the product topology on $\text{F}^{r}(M)$, the entropy is discontinuous at any sequence.

\medskip

 Many results are well-known about the continuity of the entropy  of single maps.  In
\cite{Newhouse},   Newhouse proved that the topological entropy of $C^{\infty}$-diffeomor\-phisms  on a compact Riemannian manifold    is an
 upper semicontinuous map. Furthermore, if $M$ is a surface, this map is continuous. The entropy for any homeomorphism on the circle $\mathbb{S}^{1}$ is zero. Therefore, it depends continuously on homeomorphisms on $\mathbb{S}^{1}$. In contrast, the entropy does not depend on continuous maps that are not   homeomorphisms on the circle (see \cite{Block}).  About the continuity of the entropy for flows, readers could see \cite{Jiagang}.

\medskip

Next, I will   talk about the structure of this work. In the next section I will give a motivation for this work as well as some further generalizations.  Section 3 will be devoted to remembering the notions of non-autono\-mous dynamical systems. Furthermore we will also   see the type of conjugacies that work for these   systems and   the strong   and product topologies on $\text{F}^{r}(M)$.  In Section 4 will be introduced the entropy for non-autonomous dynamical systems. This will be given using both open partitions of $M$   and also   separated and spanning  sets. These definitions coincide, as in the case of single maps.  Some properties of the entropy will be given in Section 5. These properties generalize to the ones of the entropy of single maps.   Finally, in Section 6, we will see that the entropy is continuous  on $\text{F}^{r}(M)$ with the strong topology if $r\geq 1$. More specifically, it  is locally constant.    In contrast, it is discontinuous at any sequence if we consider the product topology on $\text{F}^{r}(M)$. 

\medskip

The present work was carried out with the support of   the Conselho Nacional de Desenvolvimento Cient\'ifico  e Tecnol\'ogico - Brasil (CNPq) and the Coordena\c c\~ao de Aperfei\c coamento de Pessoal de N\'ivel Superior  (CAPES).

\section{Motivation and Further Generalizations}
 Dynamical systems are classified via topological  conjugacies. 
Uniform conjugacies (see Definition \ref{definconjugacy}) are very  suitable for classifying non-autotomous dynamical systems,  time-one maps of flows, discrete time process generated by non-autonomous differential equations, among others 
 systems (see \cite{alb}, \cite{Kloeden}, \cite{Jeo1}, \cite{Jiagang}). In that case,  the entropy  plays a fundamental role, since it is invariant by uniform conjugacies (see  Theorem \ref{invarianteentropia}). In \cite{alb}, \cite{Jeo1} and \cite{Jeo3} can be found several properties that are invariant by uniform conjugacies. 
 
 \medskip

 Next, considering the product topology on $\text{F}^{r}(M)$, the entropy for non-autono\-mous dynamical systems could be a new tool to study the continuity of the topological entropy for some single maps (see Proposition \ref{propodeee}).

\medskip

 The entropy to be constructed here, it will be fixed a metric space $ M $ and each map $ f_{i}:M\rightarrow M$ will be a homeomorphisms. This notion can be extended considering, for each $i\in\mathbb{Z}$, a more general metric space  $M_{i}$   (that is, $M_{i}$ must not necessarily be of the form $M\times \{i\}$, as will be considered in this work)  with a fixed metric $d_i$ and each  $f_i $ being a continuous map on $M_{i}$ to $ M_{i+1}$, not necessarily a homeomorphism.   An interesting work would be to study the properties of this entropy.

\medskip

In this work will be proved  the continuity of the entropy of non-autonomous dynamical systems as long as each diffeomorphism $f_{i}$ is of class $ C^{r}$ with $ r \geq1 $.   Another very interesting work would be to study the continuity of this entropy for  sequences of H\"older continuous homeomorphisms. In this case, $M$ could be a general metric space, that is, not necessarily a differentiable manifold, and the continuity could depend on the Hausdorff dimension of $M$.    A series of results that could be very useful to work on this problem can be found in  
\cite{Block}, \cite{Kolyada},    \cite{Xionping}, among others  papers.

\section{Non-autonomous Dynamical Systems, Uniform Conjugacy and Strong Topology}
Given a   metric space    $M$   with metric $d$, consider the  \textit{disjoint union}  $$\textbf{M}=\coprod_{i\in \mathbb{Z}}{M_{i}}=\bigcup_{i\in \mathbb{Z}}{M\times{i}}.$$ The set $\textbf{M}$ will be called   \textit{total space} and the $M_{i}$ will be called  \textit{components}.    
Remember that a subset   $A\subseteq \textbf{M}$ is open in  $\textbf{M}$ if only if  $A\cap M_{i}$ is open in  $M_{i}$, for all $i\in \mathbb{Z}$. 
The total space will be equipped with the metric
\begin{equation}\label{metricatotal1}
\textbf{d}(x,y)=    \begin{cases}
        \min\{1,d(x,y)\} & \mbox{if }x,y\in M_{i} \\
        	1 & \mbox{if }x\in M_{i}, y\in M_{j} \mbox{ and }i\neq j. \\
        \end{cases} 
\end{equation}

 Two metrics   $\rho_{1}$ and  $\rho_{2}$ on a   topological space $X$  are   \textit{uniformly equivalent}   if there exist   positive numbers  $k$ and $K$ such that $k\rho_{1} (x,y)\leq \rho_{2}(x,y)\leq K\rho_{1} (x,y)$ for all $x,y\in X$. It is clear that if  $\hat{d}$ and  $\tilde{d}$ are   uniformly equivalent metrics     on $ M$, then,       $\hat{\textbf{d}} $ and $\tilde{\textbf{d}}$, obtained as in      \eqref{metricatotal1}, generate the same topology  on \textbf{M} and, in that case, they  are uniformly equivalent on \textbf{M}.  On the other hand, if $\hat{d}_{i}$ and $\tilde{d}_{i}$ are uniformly   equivalent metrics on $ M_{i}$ for each $i\in \mathbb{Z}$,   then the metrics  $\hat{\textbf{d}} $ and  $\tilde{\textbf{d}} $, defined similarly   as in  \eqref{metricatotal1}, generate the same topology on the total space, but they are not necessarily  uniformly  equivalent  on \textbf{M} (notice that \textbf{M} is not compact). Throughout
 this work, we fix a metric $d$ on $M$ and we consider the metric on the total space as it was defined in \eqref{metricatotal1}.  Without losing generality, we can suppose that the diameter of $M$ is less than or equal to 1 (in that case, if $x,y\in M_{i}$, then $\textbf{d}(x,y)=d(x,y)$).

\begin{defin}\label{leidecomposicao} A  \textit{non-autonomous dynamical system} \textbf{\textit{f}} on \textbf{M}, which will de denoted by $(\textbf{M}, \textbf{\textit{f}})$,  is an application $\textbf{\textit{f}}:\textbf{M}\rightarrow \textbf{M}$, such that, for each $i\in\mathbb{Z}$, $\textbf{\textit{f}}|_{M_{i}}=f_{i}:M_{i}\rightarrow M_{i+1}$ is a  homeomorphism. Sometimes we use the notation   $\textbf{\textit{f}}=(f_{i})_{i\in\mathbb{Z}}$. A $n$-th composition is defined, for each   $i\in \mathbb{Z}$,    as   
\begin{equation*}\textbf{\textit{f}}_{ i} ^{n}:= \begin{cases}
   f_{i+n-1}\circ \cdots\circ f_{i}:M_{i}\rightarrow M_{i+n}  & \mbox{if }n>0 \\
  f_{i-n}^{-1}\circ \cdots\circ f_{i-1}^{-1}:M_{i}\rightarrow M_{i-n}  & \mbox{if }n<0 \\
	I_{i}:M_{i}\rightarrow M_{i} & \mbox{if }n=0,\\
        \end{cases}
\end{equation*}
where $I_{i}$ is the identity on $M_{i}$.
\end{defin}

A simple example of a  non-autonomous dynamical systems  is the \textit{constant family} associated to a homeomorphism:
 
\begin{ej} \label{levantamento} Let  $\phi:M\rightarrow M$ be a   homeomorphism.   The \textit{constant family} $(\textbf{M}, \textbf{\textit{f}})$ associated to $\phi$ is the sequence $(f_{i}:M_{i}\rightarrow M_{i+1})_{i\in\mathbb{Z}}$   defined as $f_{i}(x,i)=(g(x),i+1)$ for each $x\in M$ and $i\in\mathbb{Z}$. 
\end{ej}

Next we talk about the morphisms between non-autonomous dynamical systems. Take $$\textbf{N}=\coprod_{i\in \mathbb{Z}}{N\times\{i\}},$$ where  $N $ is a   metric space, and consider  a non-autonomous dynamical system  \textbf{\textit{g}} defined on \textbf{N}. 
A \textit{topological conjugacy} between    $(\textbf{M},\textbf{\textit{f}})$ and $(\textbf{N},\textbf{\textit{g}})$  
is a map $\textbf{\textit{h}}:\textbf{M}\rightarrow \textbf{N}$, such that, for  each $i\in \mathbb{Z} ,$ $\textbf{\textit{h}}|_{M_{i}}=h_{i}:M_{i}\rightarrow N_{i}$ is a homeomorphism     and   \(h_{i+1}\circ f_{i}=g_{i}\circ h_{i}:M_{i}\rightarrow N_{i+1},\)  
that is, the following diagram commutes: 
\[\begin{CD}
M_{-1}@>{f_{-1}}>> M_{0}@>{f_{0}}>>M_{1}@>{f_{1}}>>M_{2} \\
@V{\cdots}V{h_{-1}}V @VV{h_{0}}V @VV{h_{1}}V @VV{h_{2}\cdots}V\\
N_{-1}@>{g_{-1}}>> N_{0}@>{g_{0}}>>N_{1} @>{g_{1}}>>N_{2}
\end{CD} 
\]
It is clear that the  topological conjugacies define a  equivalence relation on the set consisting of the non-autonomous dynamical systems on  $\textbf{M}$. However, if  $M_{0}$ and $N_{0}$ are homeomorphic,   the partition obtained by this relation is trivial: Indeed,  if $h_{0}$ is a homeomophism between $M_{0}$ and $N_{0}$,  the systems  $(\textbf{M} , \textbf{\textit{f}})$ and $(\textbf{N}, \textbf{\textit{g}})$ are conjugate  by $\textbf{\textit{h}}:\textbf{M}\rightarrow \textbf{N}$ defined as
\begin{equation*}\label{conjugacaotopologica} h_{i}=
\begin{cases}
  h_{0}  & \mbox{if }i=0 \\
  g_{i-1}\circ\cdots\circ g_{0}\circ h_{0}\circ f_{0} ^{-1}\circ\cdots\circ f_{i-1} ^{-1} & \mbox{if }i>0 \\
	g_{i}^{-1}\circ\cdots\circ g_{-1}^{-1}\circ h_{0}\circ f_{-1}\circ\cdots\circ f_{i} & \mbox{if }i<0.
        \end{cases} 
\end{equation*}
 
One type of conjugacy that works  for the class of non-autonomous dynamical systems are the \textit{uniform  conjugacy}:   
\begin{defin}\label{definconjugacy} We say that a topological conjugacy  $\textbf{\textit{h}}:\textbf{M}\rightarrow \textbf{N}$ between    $(\textbf{M},\textbf{\textit{f}})$ and $(\textbf{N},\textbf{\textit{g}})$   is  \textit{uniform}     if $(h_{i}:M_{i}\rightarrow N_{i})_{i\in\mathbb{Z}}$ and $(h_{i}^{-1}:N_{i}\rightarrow M_{i})_{i\in\mathbb{Z}}$ are   equicontinuous  sequences (that is, $\textbf{\textit{h}}$ and $\textbf{\textit{h}}^{-1}$ are uniformly continuous). In that case we  will say that the systems are  \textit{uniformly conjugate}. 
\end{defin}
 
Since the composition of   uniformly  continuous applications     is  uniformly continuous,
the  class consisting of non-autonomous dynamical systems becomes  a category, where the objects are the non-stationary dynamical systems and the morphisms are the   uniform conjugacies.

\medskip

Another notion of conjugacy, which is weaker than the conjugacy given in   Definition \ref{definconjugacy},  that  also  works for  non-autonomous dynamical systems   is the next one: 
\begin{defin} A \textit{positive (negative) uniform  conjugacy} between two systems  $(\textbf{M},  \textbf{\textit{f}})$ and $(\textbf{N},  \textbf{\textit{g}})$ is a sequence of homeomorphisms  $h_{i}:M_{i}\rightarrow N_{i}$ for $i\geq 0$ (for  $i\leq 0$) such that  $(h_{i})_{i\geq 0}$ and $(h_{i}^{-1})_{i\geq 0}$ ($(h_{i})_{i\leq 0}$ and $(h_{i}^{-1})_{i\leq 0}$) are equicontinuous  and   $h_{i+1}\circ f_{i}=g_{i}\circ h_{i}:M_{i}\rightarrow N_{i+1},$ for every $i\geq 0$ (for every $i\leq -1$).  That is, $(f_{i})_{i\geq 0}$ and $(g_{i})_{i\geq 0}$  ($(f_{i})_{i\leq 0}$ and $(g_{i})_{i\leq 0}$) are uniformly conjugate.
\end{defin} 

The following lemma it is clear and therefore we will omit the proof.
 
\begin{lem}\label{lemapositive}   $(\textbf{M},  \textbf{\textit{f}})$ and $(\textbf{N},  \textbf{\textit{g}})$ are positive (negative) uniformly conjugate if and only if, for any $i_{0}\in\mathbb{Z}$  there exists a  sequence of homeomorphisms    $(h_{i})_{i\geq i_0}$ ($(h_{i})_{i\leq i_0}$)  such that $(h_{i})_{i\geq i_0}$ and $(h_{i}^{-1})_{i\geq i_0}$ ($(h_{i})_{i\leq i_0}$ and $(h_{i}^{-1})_{i\leq i_0}$) are equicontinuous and $h_{i+1}\circ f_{i}=g_{i}\circ h_{i}:M_{i}\rightarrow N_{i+1},$ for every $i\geq i_0$ (for every $i\leq i_0$).
\end{lem}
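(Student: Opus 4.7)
This is an index-shifting lemma: the definition of positive (respectively negative) uniform conjugacy is insensitive to the choice of starting index. The implication from the version ``for every $i_0$'' to the version with $i_0=0$ is immediate by specialization, so all the content lies in the reverse direction.

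Suppose $(h_i)_{i\geq 0}$ is a positive uniform conjugacy between $(\textbf{M},\textbf{\textit{f}})$ and $(\textbf{N},\textbf{\textit{g}})$, and fix $i_0 \in \mathbb{Z}$. If $i_0 \geq 0$, I would simply restrict: the subfamily $(h_i)_{i\geq i_0}$ still satisfies $h_{i+1}\circ f_i = g_i\circ h_i$ for $i\geq i_0$, and restricting an equicontinuous family leaves it equicontinuous. If $i_0 < 0$, I would extend backward by inverting the intertwining identity, defining successively
\[
h_i := g_i^{-1}\circ h_{i+1}\circ f_i \colon M_i\to N_i
\]
for $i = -1, -2, \ldots, i_0$. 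Each such $h_i$ is a homeomorphism as a composition of homeomorphisms, and the conjugacy equation $h_{i+1}\circ f_i = g_i\circ h_i$ holds by construction. The negative-conjugacy case is entirely symmetric, using the forward extension $h_{i+1} := g_i \circ h_i \circ f_i^{-1}$ for $i = 0, 1, \ldots, i_0-1$ when $i_0 > 0$.

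The only step with any real content is checking that the extended families $(h_i)_{i\geq i_0}$ and $(h_i^{-1})_{i\geq i_0}$ remain equicontinuous. Here I would use that only finitely many maps $h_{i_0},\ldots,h_{-1}$ are being adjoined to the already equicontinuous family $(h_i)_{i\geq 0}$. Since $M$ is compact, each $f_i$ and each $g_i^{-1}$ is automatically uniformly continuous, so composing them with the (uniformly continuous) $h_{i+1}$ yields a uniformly continuous $h_i$. Adjoining finitely many uniformly continuous maps to an equicontinuous family preserves equicontinuity: given $\varepsilon>0$, take the minimum of the finitely many $\delta$'s arising from the individual uniform continuities and the $\delta$ provided by equicontinuity of the tail. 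The same reasoning applied to $h_i^{-1}=f_i^{-1}\circ h_{i+1}^{-1}\circ g_i$ handles the inverse family.

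No part of this argument presents a real obstacle; the ``main'' point is simply to notice that one is adding only finitely many uniformly continuous maps, which is presumably why the author calls the statement clear and omits the proof.
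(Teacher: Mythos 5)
Your argument is correct. The paper in fact omits the proof entirely (``The following lemma it is clear and therefore we will omit the proof''), and what you write is precisely the intended argument: restriction one way, finite backward (resp.\ forward) extension via $h_i = g_i^{-1}\circ h_{i+1}\circ f_i$ the other way, with equicontinuity preserved because only finitely many uniformly continuous maps are adjoined. The one point worth flagging is that your appeal to compactness of $M$ (to get uniform continuity of the finitely many new maps for free) is genuinely needed -- for a non-compact $M$, as allowed at the start of Section 3, the extension could destroy equicontinuity -- but this is consistent with the paper's standing assumptions in the sections where the lemma is actually used.
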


Take two homeomorphisms    $g_{1}:X_{1}\rightarrow X_1$ and $g_{2}:X_{2}\rightarrow X_{2}$ defined on two  compact metric spaces  $X_{1}$ and $X_{2}$.  Let $\textbf{\textit{f}}_{1}$ and $\textbf{\textit{f}}_{2}$ be the    constant families associated, respectively,   to $g_{1}$ and to $g_{2}.$ It is clear that if $g_{1}$ and  $g_{2}$ are \textit{topologically conjugate} (i. e., there exists a homeomorphism $h:X_{1}\rightarrow X_{2}$ such that $h\circ g_{1}=g_{2}\circ h$) then $\textbf{\textit{f}}_{1}$ and $\textbf{\textit{f}}_{2}$ are  uniformly conjugate. In    \cite{Jeo3}  is   proved  the reciprocal is not always   true, that is, there exist uniformly conjugate  constant families $\textbf{\textit{f}}_{1}$ and $\textbf{\textit{f}}_{2}$,  associated, respectively,   to two homeomorphisms $g_{1}$ and   $g_{2}$ that are not topologically conjugate. 
 
\begin{defin}Let     $(\textbf{M},\textbf{\textit{f}})$ and  $(\widetilde{\textbf{M}},\tilde{\textbf{\textit{f}}})$  be  non-autonomous dynamical systems. We say that  $(\widetilde{\textbf{M}},\tilde{\textbf{\textit{f}}})$ is a \textit{gathering} of  $(\textbf{M},\textbf{\textit{f}})$ if there exists a strictly increasing  sequence   of integers $(n_{i})_{i\in\mathbb{Z}}$ such that $\widetilde{M}_{i}=M_{n_{i}}$ and $\tilde{\textbf{\textit{f}}}_{i}=f_{n_{i+1}-1}\circ \cdots \circ f_{n_{i}+1}\circ f_{n_{i}}$:  
\begin{equation*}\label{gathering}\begin{CD}
\cdots M_{n_{i-1}}@>{\tilde{f}_{i-1}=f_{n_{i}-1}\circ \cdots  \circ f_{n_{i-1}}}>> M_{n_{i}}@>{\tilde{f}_{i}=f_{n_{i+1}-1}\circ \cdots  \circ f_{n_{i}}}>>M_{n_{i+1}}   \cdots
\end{CD} 
\end{equation*}
If   $(\widetilde{\textbf{M}},\tilde{\textbf{\textit{f}}})$ is a    gathering of $(\textbf{M},\textbf{\textit{f}})$,  we say that $(\textbf{M},\textbf{\textit{f}})$ is a  \textit{dispersal} of $(\widetilde{\textbf{M}},\tilde{\textbf{\textit{f}}})$. 
\end{defin}
In \cite{alb}, Proposition 2.5,  is proved that any non-autonomous dynamical system  has a dispersal, which has a gathering,
which is equal to the constant family associated to the identity  on $M$.  Notice that, if $(\textbf{M},\textbf{\textit{f}})$ and $(\textbf{N},\textbf{\textit{g}})$ are uniformly   conjugate by   $\textbf{\textit{h}}=(h_{i})_{i\in\mathbb{Z}}$, then the gatherings 
 $(\widetilde{\textbf{M}},\tilde{\textbf{\textit{f}}})$ and  $(\widetilde{\textbf{N}},\tilde{\textbf{\textit{g}}})$   obtained,   respectively, of $(\textbf{M},\textbf{\textit{f}})$ and $(\textbf{N},\textbf{\textit{g}})$ by a sequence of integers $(n_i)_{i\in\mathbb{Z}}$, are   uniformly   conjugate by the    family $\tilde{\textbf{\textit{h}}}=(\tilde{h}_{n_{i}})_{i\in\mathbb{Z}}:$ 
 \[\begin{CD}
M_{n_{i-1}}@>{f_{n_{i-1}}}>>\cdots  @>{f_{n_{i}-1}}  >> M_{n_{i}}@>{f_{n_{i}}} >>\cdots @>{f_{n_{i+1}-1}} >> M_{n_{i+1}} \\
@V{\cdots}V{h_{n_{i-1}}}V @.   @VV{h_{n_{i}}}V @.  @VV{h_{n_{i+1}}\cdots}V \\
M_{n_{i-1}}@>{g_{n_{i-1}}}>>\cdots  @>{g_{n_{i}-1}}  >> M_{n_{i}}@>{g_{n_{i}}}>>\cdots @>{g_{n_{i+1}-1}} >> M_{n_{i+1}}
\end{CD} 
\]

\medskip

 I will finish this section giving two different topologies   to the space consisting of non-autonomous dynamical systems: the \textit{product topology}   and the   \textit{strong topology}. All the results and notions that will be presented in this part of the work can be found in  \cite{Hirsch}.  Let $r\geq0$. I will suppose that $M$ is a compact $C^{r}$-Riemannian manifold with Riemannian norm $\Vert\cdot\Vert$. This norm induces a metric $d$ on $M$. Set  \[\text{F}^{r}(\textbf{M})=\{\textbf{\textit{f}}=(f_{i})_{i\in\mathbb{Z}}: f_{i}:M_{i}\rightarrow M_{i+1} \text{ is a  } C^{r}\text{-diffeomorphism}\}.\] 
If $r=0$, $\text{F}^{r}(\textbf{M})$ consists of the  sequences of homeomorphisms.  
Let \[\text{Diff}^{r}(M_{i},M_{i+1})= \{ g:M_{i}\rightarrow M_{i+1}: g \text{ is a } C^{r}\text{-diffeomorphism}\}.\] 
 The Riemannian metric $\Vert\cdot\Vert$ induces a $C^{r}$-metric on   $\text{Diff}^{r}(M_{i},M_{i+1})$, which will be denoted by $d^{r}$.  
 Notice that \[ \text{F}^{r}(\textbf{M})=\prod_{i=-\infty} ^{+\infty}  \text{Diff}^{r}(M_{i},M_{i+1}) .\]
 
\begin{defin}\label{producttopo}   The \textit{product topology} on $\text{F}^{r}(\textbf{M})$ is generated by the sets    
\[ \mathcal{U}=\prod_{i<-j}   \text{Diff}^{r}(M_{i},M_{i+1})\times \prod_{i=-j}^{j}[U_{i} ]\times \prod_{i>j}   \text{Diff}^{r}(M_{i},M_{i+1}), \]
where $U_{i}$ is an open subset of $\text{Diff}^{r}(M_{i},M_{i+1})$, for $-j\leq i\leq j,$ for some  $j\in \mathbb{N}$.  
The space $\text{F}^{r}(\textbf{M})$ with the product topology will be denoted by $(\text{F}^{r}(\textbf{M}),\tau_{prod}).$ 
\end{defin}

\begin{defin}\label{strongtopo}     For each  $\textbf{\textit{f}}\in \text{F}^{r}(\textbf{M})$ and a sequence of positive numbers   $\varepsilon=(\varepsilon_{i})_{i\in \mathbb{Z}}$, a  \textit{strong basic neighborhood} of $\textbf{\textit{f}}$ is the set \[ B^{r}(\textbf{\textit{f}},\varepsilon)=  \left\{\textbf{\textit{g}}=(g_{i})_{i\in \mathbb{Z}}\in \text{F}^{r}(\textbf{M}):    d^{r} (f_{i},g_{i}) <\varepsilon_{i},\text{ for all  }  i\in\mathbb{Z}\right\}. \]  
 The $C^{r}$-\textit{strong topology} (or $C^{r}$-\textit{Whitney topology}) on $\text{F}^{r}(\textbf{M})$ is generated by the strong basic neighborhoods  of each $\textbf{\textit{f}}\in \text{F}^{r}(\textbf{M})$. The space $\text{F}^{r}(\textbf{M})$ with the strong topology will be denoted by $(\text{F}^{r}(\textbf{M}),\tau_{str}).$ 
 \end{defin}

Notice that $\tau_{str}$ is finer than   $\tau_{prod}$ on $\text{F}^{r}(\textbf{M})$, that is, 
\begin{align*} I : (\text{F}^{r}(\textbf{M}),\tau_{str})&\rightarrow (\text{F}^{r}(\textbf{M}),\tau_{prod})\\
(f_{i})_{i\in\mathbb{Z}}& \mapsto (f_{i})_{i\in\mathbb{Z}}  
\end{align*}
is   continuous.

\section{Entropy for Non-stationary Dynamical Systems}

  In this section we will see how the topological entropy for a non-autonomous dynamical systems $(f_{i})_{i\in\mathbb{Z}}$ is constructed.       Firstly, this  entropy  will not be    a real number, but a   sequence of non-negative numbers (possibly $+\infty$) $(a_{i})_{i\in\mathbb{Z}}$ where each $a_{i}$ depends only on $f_{j}$ for $j\geq i.$    In the next section will be proved that this sequence is constant. Consequently, this common value will be considered as the topological  entropy 
of   $(f_{i})_{i\in\mathbb{Z}}$. The proof  of the   statements  in this section can be found in \cite{Walters} for the case of a  single map. Such proofs  can be adapted for non-stationary dynamical systems and, therefore, will be omitted.  

\medskip

In order to define the entropy,    consider the following notions:  an \textit{open cover} of  $M$ is a collection of open subsets of $M$, $\mathcal{A}=\{A_{\lambda}\}_{\lambda\in\Lambda},$ such that $M=\bigcup _{\lambda} A_{\lambda}.$ In this section,  $\mathcal{A}$ and $\mathcal{B}$ will denote   open covers of $M$. Since $M_{i}=M\times \{i\},$ if  $\mathcal{A}$ is an open  cover of $M$, then $\mathcal{A}_{i}=\mathcal{A}\times \{i\} $ is an open cover of $M_{i}$. By abuse of notation,  I will omit the sub index $i$ of  $\mathcal{A}_{i}$  for covers of  $M_{i}$. 
 
\begin{defin}\label{cee} Let $N(\mathcal{A})$ be the number of sets in a finite subcover of  $\mathcal{A}$ with smallest cardinality.  The \textit{entropy} of $\mathcal{A}$ is the number   $H(\mathcal{A}):=\text{log} N(\mathcal{A})$. 
\end{defin}

For each $i\in\mathbb{Z}$   and $n\geq 0,$   set
   $(\textbf{\textit{f}}_{ i} ^{n})^{-1}(\mathcal{A})=\{(f_{i+n-1}\circ\cdots\circ f_{i})^{-1}(A):A\in \mathcal{A}\}$. Set  $\mathcal{A}  \vee \mathcal{B}=\{A\cap B:A\in \mathcal{A}, B\in \mathcal{B}\}.$  Inductively we can define $\bigvee_{m=1}^{k}\mathcal{A}^{m}$ for a collection of open covers $\mathcal{A}^{1},..., \mathcal{A}^{k}$ of $M.$     $\mathcal{B}$ is a \textit{refinement} of $ \mathcal{A}$ if each element of $ \mathcal{B} $ is contained in some element of $ \mathcal{A}$.

\begin{propo}\label{propoentropia} The entropy satisfies the following properties:
\begin{enumerate}
\item $H (\mathcal{A}\vee \mathcal{B})\leq H(\mathcal{A})+ H(\mathcal{B}).$ 
\item If $\mathcal{B}$ is a refinement    of $ \mathcal{A}$ then $H(\mathcal{A})\leq H (\mathcal{B})$.
\item $H(\mathcal{A})=H((\textbf{f}_{ i} ^{\, k})^{-1}(\mathcal{A}))$  for each $i\in\mathbb{Z}$ and $k\geq0.$ 
\item $H(\bigvee_{k=0}^{n-1}(\textbf{f}_{ i} ^{\, k})^{-1}(\mathcal{A})) \leq n H(\mathcal{A})$, for each $i\in\mathbb{Z}$ and $n\geq1.$ 
\item The limit  
\begin{equation} \label{limiteentropia} 
H_{i}(\textbf{f},\mathcal{A})=\lim_{n\rightarrow +\infty} \frac{1}{n}H\left(\bigvee_{k=0}^{n-1}
(\textbf{f}_{ i} ^{ \, k})^{-1}(\mathcal{A})\right)
\end{equation}
exists and is  finite, for each $i\in\mathbb{Z}$. 
\end{enumerate}
\end{propo}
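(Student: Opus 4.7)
For (1), I would take minimal subcovers $\{A_1,\ldots,A_p\}\subseteq\mathcal{A}$ and $\{B_1,\ldots,B_q\}\subseteq\mathcal{B}$ with $p=N(\mathcal{A})$ and $q=N(\mathcal{B})$. Their pairwise intersections $\{A_j\cap B_k\}$ form a subcover of $\mathcal{A}\vee\mathcal{B}$ of cardinality at most $pq$, so $N(\mathcal{A}\vee\mathcal{B})\leq N(\mathcal{A})\,N(\mathcal{B})$, and passing to logarithms yields the claim. For (2), fix a minimal subcover $\{B_1,\ldots,B_q\}$ of $\mathcal{B}$; the refinement hypothesis provides $A_j\in\mathcal{A}$ with $B_j\subseteq A_j$, and these $A_j$ form a subcover of $\mathcal{A}$, whence $N(\mathcal{A})\leq N(\mathcal{B})$.

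The essential point for (3) is that $\textbf{\textit{f}}_i^{\,k}:M_i\to M_{i+k}$ is a bijection, so $\{A_1,\ldots,A_r\}\subseteq\mathcal{A}$ covers $M_{i+k}$ if and only if $\{(\textbf{\textit{f}}_i^{\,k})^{-1}(A_j)\}$ covers $M_i$. Pulling back is therefore a cardinality-preserving bijection between finite subcovers, giving $N((\textbf{\textit{f}}_i^{\,k})^{-1}(\mathcal{A}))=N(\mathcal{A})$. Item (4) then follows by iterating (1) on the join and applying (3) to each factor: $H\bigl(\bigvee_{k=0}^{n-1}(\textbf{\textit{f}}_i^{\,k})^{-1}(\mathcal{A})\bigr)\leq\sum_{k=0}^{n-1}H((\textbf{\textit{f}}_i^{\,k})^{-1}(\mathcal{A}))=nH(\mathcal{A})$.

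For (5), I set $a_n^{(i)}:=H\bigl(\bigvee_{k=0}^{n-1}(\textbf{\textit{f}}_i^{\,k})^{-1}(\mathcal{A})\bigr)$ and exploit the identity $\bigvee_{k=n}^{n+m-1}(\textbf{\textit{f}}_i^{\,k})^{-1}(\mathcal{A})=(\textbf{\textit{f}}_i^{\,n})^{-1}\bigvee_{j=0}^{m-1}(\textbf{\textit{f}}_{i+n}^{\,j})^{-1}(\mathcal{A})$; combined with (1) and (3), this yields the near-subadditive inequality $a_{n+m}^{(i)}\leq a_n^{(i)}+a_m^{(i+n)}$. Compactness of $M$ makes $H(\mathcal{A})<\infty$, and (4) then produces the uniform bound $a_n^{(i)}/n\leq H(\mathcal{A})$, which secures finiteness.

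The main obstacle is the \emph{existence} of the genuine limit rather than just a $\limsup$: the shifted base point $i+n$ on the right-hand side of the near-subadditive inequality prevents a direct application of Fekete's lemma, which is the mechanism used in Walters for a single map. I expect to handle this either by reinterpreting the defining formula as a $\limsup$, following the convention of Kolyada--Snoha, or by a more delicate argument that replaces $\mathcal{A}$ with a refinement and exploits the monotonicity provided by (2); this is the step where adapting the single-map proof requires the most care.
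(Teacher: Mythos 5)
Your treatment of items (1)--(4) is correct and is exactly the standard argument from Walters that the paper itself invokes (the paper omits the proof entirely, stating only that the single-map proofs ``can be adapted''); in particular your use of the bijectivity of $\textbf{\textit{f}}_i^{\,k}$ for (3) and the iteration of (1) and (3) for (4) is the right adaptation, and your subadditivity identity $a_{n+m}^{(i)}\leq a_n^{(i)}+a_m^{(i+n)}$ is correctly derived.

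The obstacle you flag in item (5) is a genuine gap, and it lies in the statement itself rather than in your argument. Because of the shifted base point $i+n$, the sequence $(a_n^{(i)})_{n}$ is not subadditive and Fekete's lemma does not apply; moreover, the limit can actually fail to exist. Take a homeomorphism $\phi$ and a cover $\mathcal{A}$ with $\lim_n \frac{1}{n}H\bigl(\bigvee_{k=0}^{n-1}\phi^{-k}\mathcal{A}\bigr)=c>0$, and let $f_i=\phi$ on blocks of indices whose density in $\{0,\dots,n-1\}$ oscillates between two values $\alpha<\beta$ along subsequences, with $f_i$ the identity elsewhere. Then $\textbf{\textit{f}}_0^{\,k}$ coincides with $\phi^{m(k)}$, where $m(k)$ counts the non-identity maps among $f_0,\dots,f_{k-1}$, so $a_n^{(0)}=H\bigl(\bigvee_{j=0}^{m(n-1)}\phi^{-j}\mathcal{A}\bigr)\approx c\,m(n-1)$, and $a_n^{(0)}/n$ oscillates between $c\alpha$ and $c\beta$. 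The correct resolution is the first option you propose: replace the limit in \eqref{limiteentropia} by a $\limsup$, as Kolyada and Snoha do. This costs nothing downstream, since the spanning/separated-set entropy $\textbf{H}_i$ is already defined via a $\limsup$ and the identification $\mathcal{H}_i=\textbf{H}_i$ goes through unchanged, while finiteness follows from your uniform bound $a_n^{(i)}/n\leq H(\mathcal{A})$ obtained from (4).
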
 

\begin{defin}\label{entropiacover} We define the  \textit{entropy} of  $\textbf{\textit{f}}$ \textit{relative to} $\mathcal{A}$ as the sequence  $\mathcal{H}(\textbf{\textit{f}},\mathcal{A})=(H_{i}(\textbf{\textit{f}},\mathcal{A}))_{i\in\mathbb{Z}}$.
The \textit{topological entropy of} $\textbf{\textit{f}}$ is the sequence  $\mathcal{H}(\textbf{\textit{f}})=(\mathcal{H}_{i}(\textbf{\textit{f}}))_{i\in\mathbb{Z}}$, where \[\mathcal{H}_{i}(\textbf{\textit{f}})=\sup \{H_{i}(\textbf{\textit{f}},\mathcal{A}):\mathcal{A} \text{ is an open cover of }M\}.\]
\end{defin} 

From now on, $X$ will represent a compact metric space. We recall   the \textit{topological entropy} of a homeomorphism $ g: X \rightarrow X $, which we denote by $ h (g) $, is defined considering open covers of   $ X $.
Definition \ref{entropiacover} only makes sense when   $\mathcal{A}$ is an  open cover of $M$ instead of a general  open cover of   \textbf{M}. If we consider arbitrary collections of open covers of each $M_{i}$,   the limit  \eqref{limiteentropia} could be infinite (we   can take open covers  $\mathcal{A}_{i}$ of each $M_{i}$ with   $N(\mathcal{A}_{i})$ arbitrarily large, for each $i$).

\medskip

 Now we introduce the definition of   topological entropy using spanning  and separated subsets. That entropy will be called   $\star$-\textit{topological entropy}  for  differentiate it from the topological entropy. As in the case of a single homeomorphism,   the topological entropy coincides with $\star$-topological entropy for sequences (see   Theorem \ref{igualdadeentropias}).
 
\begin{defin}Let $n\in\mathbb{N}$, $\varepsilon>0$ and $i\in\mathbb{Z}$ be given. We say that a compact subset $K\subseteq M_{i}$ is a  $(n,\varepsilon)$-\textit{span} of  $M_{i}$  \textit{with respect} \textbf{\textit{f}} if for each $x\in M_{i}$ there exists $y\in K$ such that $\max_{0\leq j< n}\textbf{d}( \textbf{\textit{f}}_{i} ^{j}(x), \textbf{\textit{f}}_{i} ^{j}(y))<\varepsilon$, i. e., \(M_{i}\subseteq \bigcup_{y\in K} \bigcap _{k=0}^{n-1}(\textbf{\textit{f}}_{i}^{\,k})^{-1}(\overline{B(\textbf{\textit{f}}_{i}^{\,k}(y),\varepsilon)}),\)
where $ B(\textbf{\textit{f}}_{i}^{\,k}(y),\varepsilon)$ is the open ball with center   $\textbf{\textit{f}}_{i}^{\,k}(y)\in M_{i+k}$ and radius $\varepsilon$.\end{defin} 
Denote by $r[n,i](\varepsilon,\textbf{\textit{f}})$ the smallest cardinality of any $(n,\varepsilon)$-span of $M_{i}$ with respect \textbf{\textit{f}}. Since $M_{i}$ is compact, we have $r[n,i](\varepsilon,\textbf{\textit{f}})<\infty$ for each $i\in\mathbb{Z}$ and $n\geq1.$  
Set \( r[i](\varepsilon,\textbf{\textit{f}})=\underset{n\rightarrow +\infty}\limsup  \frac{1}{n}\log r[n,i](\varepsilon,\textbf{\textit{f}}).\)

\begin{defin}\label{entropiametrica} The $\star$-\textit{topological entropy of} \textbf{\textit{f}} is the sequence $\textbf{H}(\textbf{\textit{f}})=(\textbf{H}_{i}(\textbf{\textit{f}}))_{i\in\mathbb{Z}}$ given by
$$\textbf{H}_{i}(\textbf{\textit{f}})= \lim_{\varepsilon\rightarrow 0}r[i](\varepsilon,\textbf{\textit{f}}).$$
\end{defin}
Now we define the entropy for families using separated subsets and we will prove that the   entropy  considering span subsets   coincide with the entropy considering separated subsets.  
\begin{defin} Let $n\in\mathbb{N}$, $\varepsilon>0$ and $i\in\mathbb{Z}$ be fixed. A subset $E\subseteq M_{i}$ is called   $(n,\varepsilon)$-\textit{separated with  respect to} \textbf{\textit{f}} if given $x,y\in E$, with $x\neq y$, we have $\max_{0\leq j< n}\textbf{d}( \textbf{\textit{f}}_{i} ^{j}(x), \textbf{\textit{f}}_{i} ^{j}(y))>\varepsilon$, i. e., if for all $x\in E$, the set \( \bigcap _{k=0}^{n-1}(\textbf{\textit{f}}_{i}^{\,k})^{-1}(\overline{B(\textbf{\textit{f}}_{i}^{\,k}(x),\varepsilon)})\)
  contains no other point of $E$. 
  \end{defin}
Denote by $s[n,i](\varepsilon,\textbf{\textit{f}})$   the largest cardinality of any  $(n,\varepsilon)$-separated subset of $M_{i}$ with respect to $\textbf{\textit{f}}$.
Set \( s[i](\varepsilon,\textbf{\textit{f}})=\underset{n\rightarrow +\infty}\limsup \frac{1}{n}\log s[n,i](\varepsilon,\textbf{\textit{f}}).\) 

  \begin{propo}\label{emparedado} Given $\varepsilon>0$ and $i\in\mathbb{Z}$ we have:
 \begin{enumerate}
 \item $r[n,i](\varepsilon,\textbf{f})\leq s[n,i](\varepsilon,\textbf{f})\leq r[n,i](\varepsilon/2,\textbf{f})$, for all $n>0$. 
 \item $r[i](\varepsilon,\textbf{f})\leq s[i](\varepsilon,\textbf{f})\leq r[i](\varepsilon/2,\textbf{f})$, for all  $n>0$. 
 \end{enumerate}
 \end{propo}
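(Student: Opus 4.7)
The plan is to mimic the classical one-map argument of Walters, treating all quantities fiberwise on the component $M_i$. It is convenient to introduce the auxiliary ``Bowen pseudometric''
\[
d_n^i(x,y)=\max_{0\le j< n}\textbf{d}\bigl(\textbf{\textit{f}}_{i}^{\,j}(x),\textbf{\textit{f}}_{i}^{\,j}(y)\bigr),\qquad x,y\in M_i,
\]
so that a set is $(n,\varepsilon)$-spanning (resp.\ $(n,\varepsilon)$-separated) iff every point of $M_i$ is within $d_n^i$-distance $\varepsilon$ of some element of the set (resp.\ any two distinct elements have $d_n^i$-distance strictly greater than $\varepsilon$). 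Since each $\textbf{\textit{f}}_{i}^{\,j}$ is continuous, $d_n^i$ is a genuine metric on $M_i$ uniformly equivalent to $\textbf{d}|_{M_i}$.

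For the inequality $r[n,i](\varepsilon,\textbf{\textit{f}})\le s[n,i](\varepsilon,\textbf{\textit{f}})$ I would pick a maximal $(n,\varepsilon)$-separated set $E\subseteq M_i$ of cardinality $s[n,i](\varepsilon,\textbf{\textit{f}})$ (which exists because $M_i$ is compact, as in Proposition \ref{propoentropia}). For every $x\in M_i\setminus E$, maximality forces some $y\in E$ with $d_n^i(x,y)\le \varepsilon$; otherwise $E\cup\{x\}$ would still be $(n,\varepsilon)$-separated. Hence $E$ is itself an $(n,\varepsilon)$-span, and the inequality follows.

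For $s[n,i](\varepsilon,\textbf{\textit{f}})\le r[n,i](\varepsilon/2,\textbf{\textit{f}})$, take any $(n,\varepsilon)$-separated set $E$ and any $(n,\varepsilon/2)$-spanning set $K$. Using the axiom of choice, define $\phi\colon E\to K$ by assigning to each $x\in E$ a point $\phi(x)\in K$ with $d_n^i(x,\phi(x))\le \varepsilon/2$. If $\phi(x)=\phi(y)$ for some distinct $x,y\in E$, the triangle inequality for $d_n^i$ gives $d_n^i(x,y)\le \varepsilon$, contradicting that $E$ is $(n,\varepsilon)$-separated. Thus $\phi$ is injective and $|E|\le|K|$, proving the claim. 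This yields part $(1)$.

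Part $(2)$ is then immediate: applying $\limsup_{n\to\infty}\tfrac{1}{n}\log(\cdot)$ to the chain in $(1)$ preserves the inequalities, giving $r[i](\varepsilon,\textbf{\textit{f}})\le s[i](\varepsilon,\textbf{\textit{f}})\le r[i](\varepsilon/2,\textbf{\textit{f}})$. There is no real obstacle in the argument; the only point worth emphasizing is that everything happens inside the single component $M_i$, so the non-autonomous nature of the system plays no role beyond changing what ``ball around $\textbf{\textit{f}}_{i}^{\,k}(y)$'' means as $k$ varies, and compactness of $M_i$ (and hence finiteness of $r[n,i]$ and $s[n,i]$) is preserved since $M_i\cong M$.
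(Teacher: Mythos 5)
Your proof is correct and is precisely the standard Walters argument (a maximal $(n,\varepsilon)$-separated set is $(n,\varepsilon)$-spanning; an injection from an $(n,\varepsilon)$-separated set into an $(n,\varepsilon/2)$-spanning set via the triangle inequality for the Bowen metric), which is exactly the adaptation the paper has in mind when it omits the proofs of this section and refers to \cite{Walters}. The only cosmetic point is the usual $<\varepsilon$ versus $\le\varepsilon$ convention in the definition of a span, which the paper itself leaves ambiguous (strict inequality in words, closed balls in the displayed formula) and which does not affect the statement.
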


From Proposition \ref{emparedado} we have  \(\textbf{H}_{i}(\textbf{\textit{f}})=\lim_{\varepsilon\rightarrow 0}s[i](\varepsilon,\textbf{\textit{f}})\) for all $i\in\mathbb{Z}$. 
 Consequently,   $\textbf{H}_{i}(\textbf{\textit{f}})$ can be defined using either span or separated subsets.
  
\medskip

Notice that if  $ \textit{\textbf{f}} $ is a constant family associated to a  homeomorphism $\phi:X\rightarrow X,$ then it is clear that 
 \begin{equation}\label{igualdadeentropiassd}
   \textbf{H}_{i}(\textbf{\textit{f}})=h(\phi),\quad\text{ for all  }i\in\mathbb{Z}.
\end{equation}
Therefore,   \textbf{H} generalizes the notion of  topological entropy for single homeomorphisms.

\medskip

Some estimations of the topological entropy for non-autonomous dynamical
systems can be found in   \cite{Shao}, \cite{Zhang} and  \cite{Zhu}. 
 
\section{Some Properties of the Entropy}
In this section we will see some properties of the topological entropy.  Some of them are analogous to the well-known properties of entropy for single maps. For singular maps, the topological entropy is invariant by  topological conjugacies. The main result of this section is to prove the analogous result for non-autonomous dynamical systems, that is, the entropy is invariant by uniformly conjugacies between sequences (see Theorem \ref{invarianteentropia}).    This result will be fundamental to show the continuity of the entropy in Section 6 (see Theorem \ref{teoprinc}).   
 
\medskip

 As we had mentioned,  the notions of entropy for families of homeomorphisms, considering either open covers or separated subsets,   coincide. This fact can be proved analoguosly  as in the case of single homeomorphisms (see \cite{Walters}, Chapter 7, Section 2): 

\begin{propo}\label{igualdadeentropias}  For each $i\in\mathbb{Z}$ we have $\mathcal{H}_{i}(\textbf{\textit{f}})=\textbf{H}_{i}(\textbf{\textit{f}}).$ 
\end{propo}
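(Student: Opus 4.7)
The plan is to follow the classical proof of the equivalence between the cover definition and the spanning/separated sets definition of topological entropy (see \cite{Walters}, Chapter 7, Section 2), with only notational modifications to account for the fact that we work on the disjoint union $\textbf{M}$. The key observation that makes the adaptation routine is that every component $M_i$ is an isometric copy of $(M,d)$, so any uniform continuity quantity (Lebesgue numbers, diameters of covers) computed on $M$ applies simultaneously to every $M_i$. I would prove the two inequalities $\mathcal{H}_i(\textbf{\textit{f}}) \le \textbf{H}_i(\textbf{\textit{f}})$ and $\textbf{H}_i(\textbf{\textit{f}}) \le \mathcal{H}_i(\textbf{\textit{f}})$ separately.

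For the first inequality, fix an open cover $\mathcal{A}$ of $M$ and let $\delta>0$ be a Lebesgue number for $\mathcal{A}$, which works for every $M_{i+k}$. Given a minimal $(n,\delta/2)$-span $K\subseteq M_i$ of cardinality $r[n,i](\delta/2,\textbf{\textit{f}})$, for each $y\in K$ and each $0\le k\le n-1$ the ball $B(\textbf{\textit{f}}_i^{\,k}(y),\delta/2)\subseteq M_{i+k}$ has diameter at most $\delta$, hence is contained in some $A_{y,k}\in\mathcal{A}$. Therefore
\[
\bigcap_{k=0}^{n-1}(\textbf{\textit{f}}_i^{\,k})^{-1}\bigl(B(\textbf{\textit{f}}_i^{\,k}(y),\delta/2)\bigr)\subseteq \bigcap_{k=0}^{n-1}(\textbf{\textit{f}}_i^{\,k})^{-1}(A_{y,k}),
\]
and, as $y$ ranges over $K$, the right-hand sides form a subcover of $\bigvee_{k=0}^{n-1}(\textbf{\textit{f}}_i^{\,k})^{-1}\mathcal{A}$ of cardinality at most $|K|$. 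Hence $H\bigl(\bigvee_{k=0}^{n-1}(\textbf{\textit{f}}_i^{\,k})^{-1}\mathcal{A}\bigr)\le \log r[n,i](\delta/2,\textbf{\textit{f}})$; dividing by $n$, letting $n\to\infty$, using the monotonicity of $r[i](\cdot,\textbf{\textit{f}})$ in $\varepsilon$, and taking the supremum over $\mathcal{A}$ yields $\mathcal{H}_i(\textbf{\textit{f}})\le \textbf{H}_i(\textbf{\textit{f}})$.

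For the reverse inequality, given $\varepsilon>0$ choose, by compactness of $M$, an open cover $\mathcal{A}$ of $M$ consisting of sets of diameter less than $\varepsilon$. If $E\subseteq M_i$ is $(n,\varepsilon)$-separated with respect to $\textbf{\textit{f}}$, then any two distinct points $x,y\in E$ satisfy $\textbf{d}(\textbf{\textit{f}}_i^{\,j}(x),\textbf{\textit{f}}_i^{\,j}(y))>\varepsilon$ for some $0\le j<n$, so no element of $\mathcal{A}$ at level $i+j$ contains both $\textbf{\textit{f}}_i^{\,j}(x)$ and $\textbf{\textit{f}}_i^{\,j}(y)$; consequently no member of $\bigvee_{k=0}^{n-1}(\textbf{\textit{f}}_i^{\,k})^{-1}\mathcal{A}$ contains both $x$ and $y$. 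Thus $s[n,i](\varepsilon,\textbf{\textit{f}})\le N\bigl(\bigvee_{k=0}^{n-1}(\textbf{\textit{f}}_i^{\,k})^{-1}\mathcal{A}\bigr)$, whence $s[i](\varepsilon,\textbf{\textit{f}})\le H_i(\textbf{\textit{f}},\mathcal{A})\le \mathcal{H}_i(\textbf{\textit{f}})$; letting $\varepsilon\to 0$ and invoking Proposition \ref{emparedado} to identify $\lim_{\varepsilon\to 0}s[i](\varepsilon,\textbf{\textit{f}})$ with $\textbf{H}_i(\textbf{\textit{f}})$ gives $\textbf{H}_i(\textbf{\textit{f}})\le \mathcal{H}_i(\textbf{\textit{f}})$. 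The only point requiring attention, and hence the closest thing to an obstacle, is this uniformity across components: since $\textbf{d}$ restricted to each $M_i$ agrees with $d$ on $M$ (using $\mathrm{diam}(M)\le 1$), a single Lebesgue number and a single small-diameter cover of $M$ serve uniformly in every component, so the single-map argument transplants without change.
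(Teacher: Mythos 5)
Your proposal is correct and follows exactly the route the paper intends: the paper omits the proof, stating only that the classical argument of Walters (Chapter 7, Section 2) adapts, and your two inequalities via Lebesgue numbers and small-diameter covers are precisely that adaptation. The one point you rightly single out --- that a single Lebesgue number and a single small-diameter cover of $M$ serve uniformly in every component because each $M_{i}$ is an isometric copy of $(M,d)$ under the metric $\textbf{d}$ --- is the only genuinely new ingredient needed.
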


The topological entropy $h(\phi)$ of a single homeomorphism  
$\phi:X\rightarrow X$  satisfies $h(\phi^{n})=|n|h(\phi),$ for $n\in \mathbb{Z}.$ 
For families we have: 
\begin{propo}\label{gateringn} Suppose  $ \textbf{\textit{f}}=(f_{i})_{i\in\mathbb{Z}}$ is an equicontinuous sequence. Fix    $n\geq1.$ Let      $(\widetilde{\textbf{M}},\tilde{\textbf{f}})$ be the gathering obtained of $(\textbf{M},\textbf{f})$  by the sequence $(ni)_{i\in\mathbb{Z}}$,  that is,   $\widetilde{M}_{i}=M_{ni}$ and $\tilde{f}_{i}=f_{n(i+1)-1}\circ \cdots \circ f_{ni};$  
\[\begin{CD}
\cdots M_{n(i-1)}@>{\tilde{f}_{i-1}=f_{ni-1}\circ \cdots  \circ f_{n(i-1)}}>> M_{ni}@>{\tilde{f}_{i}=f_{n(i+1)-1}\circ \cdots  \circ f_{ni}}>>M_{n(i+1)}   \cdots
\end{CD} 
\]
Thus, for each $i\in\mathbb{Z}$ we have  \(\textbf{H}_{i}(\tilde{\textbf{\textit{f}}} )=n\textbf{H}_{in}(\textbf{\textit{f}}).\)
\end{propo}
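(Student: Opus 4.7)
The plan is to mimic the classical identity $h(\phi^n)=nh(\phi)$ for a single homeomorphism by comparing $(nk,\varepsilon)$-spanning sets for $\textbf{\textit{f}}$ (starting at index $ni$) with $(k,\varepsilon)$-spanning sets for $\tilde{\textbf{\textit{f}}}$ (starting at index $i$). The key bridging identity is the computation $\tilde{\textbf{\textit{f}}}_i^{\,k}=\textbf{\textit{f}}_{ni}^{\,nk}$, which follows directly from the definition of the gathering. Once this is in hand, spans and separated sets at the two scales are related in a transparent way, and the equicontinuity hypothesis appears exactly where it is needed.

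First I would prove the easy inequality $\textbf{H}_i(\tilde{\textbf{\textit{f}}})\leq n\,\textbf{H}_{ni}(\textbf{\textit{f}})$. Any $(nk,\varepsilon)$-spanning set of $M_{ni}$ with respect to $\textbf{\textit{f}}$ checks the distance at every time in $\{0,1,\dots,nk-1\}$, hence in particular at the times $\{0,n,2n,\dots,(k-1)n\}$, which are precisely the orbit times governing $\tilde{\textbf{\textit{f}}}_i^{\,j}$ for $0\leq j<k$. Consequently $r[k,i](\varepsilon,\tilde{\textbf{\textit{f}}})\leq r[nk,ni](\varepsilon,\textbf{\textit{f}})$. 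Taking $\tfrac{1}{k}\log$ and $\limsup_{k\to\infty}$ produces $r[i](\varepsilon,\tilde{\textbf{\textit{f}}})\leq n\cdot r[ni](\varepsilon,\textbf{\textit{f}})$, since $\limsup$ along the subsequence $N=nk$ is dominated by $\limsup$ along all $N$. Letting $\varepsilon\to 0$ gives the claimed upper bound.

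The reverse inequality is where equicontinuity of $(f_i)_{i\in\mathbb{Z}}$ is essential: I want to promote a $(k,\delta)$-spanning set for $\tilde{\textbf{\textit{f}}}$ (which only controls orbits at multiples of $n$) into an $(nk,\varepsilon)$-spanning set for $\textbf{\textit{f}}$ (which also must control the intermediate iterates). Given $\varepsilon>0$, equicontinuity of $(f_i)_{i\in\mathbb{Z}}$ together with the fact that compositions of at most $n-1$ equicontinuous maps are again equicontinuous yields $\delta=\delta(\varepsilon)>0$, which I arrange to satisfy $\delta\leq\varepsilon$ and $\delta(\varepsilon)\to 0$ as $\varepsilon\to 0$, such that for every $j\in\mathbb{Z}$ and $0\leq r<n$,
\begin{equation*}
\textbf{d}(a,b)<\delta\;\Longrightarrow\;\textbf{d}(\textbf{\textit{f}}_j^{\,r}(a),\textbf{\textit{f}}_j^{\,r}(b))<\varepsilon.
\end{equation*}
Writing any $0\leq m<nk$ as $m=jn+r$ with $0\leq j<k$ and $0\leq r<n$, and applying this uniform modulus of continuity to the pair $(\textbf{\textit{f}}_{ni}^{\,jn}(x),\textbf{\textit{f}}_{ni}^{\,jn}(y))$, one sees that a $(k,\delta)$-spanning set for $\tilde{\textbf{\textit{f}}}$ at $M_i$ is automatically an $(nk,\varepsilon)$-spanning set for $\textbf{\textit{f}}$ at $M_{ni}$, so $r[nk,ni](\varepsilon,\textbf{\textit{f}})\leq r[k,i](\delta,\tilde{\textbf{\textit{f}}})$. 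This step is the main obstacle, since without equicontinuity one cannot expect a single $\delta$ that works simultaneously for all starting indices $ni+jn$.

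To finish, I need the auxiliary fact that $r[ni](\varepsilon,\textbf{\textit{f}})$ can be computed as a limsup along the subsequence $N=nk$, i.e.\ $r[ni](\varepsilon,\textbf{\textit{f}})=\limsup_{k\to\infty}\tfrac{1}{nk}\log r[nk,ni](\varepsilon,\textbf{\textit{f}})$. This is a short monotonicity argument: $r[N,ni](\varepsilon,\textbf{\textit{f}})$ is non-decreasing in $N$, so sandwiching any $N$ between consecutive multiples $nk\leq N<n(k+1)$ shows that the full $\limsup$ and the subsequential $\limsup$ agree. Combining this with the estimate of the previous paragraph yields
\begin{equation*}
n\cdot r[ni](\varepsilon,\textbf{\textit{f}})=\limsup_{k\to\infty}\tfrac{1}{k}\log r[nk,ni](\varepsilon,\textbf{\textit{f}})\leq r[i](\delta,\tilde{\textbf{\textit{f}}}),
\end{equation*}
and letting $\varepsilon\to 0$ (so $\delta(\varepsilon)\to 0$) gives $n\,\textbf{H}_{ni}(\textbf{\textit{f}})\leq\textbf{H}_i(\tilde{\textbf{\textit{f}}})$. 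The two inequalities together prove $\textbf{H}_i(\tilde{\textbf{\textit{f}}})=n\,\textbf{H}_{ni}(\textbf{\textit{f}})$.
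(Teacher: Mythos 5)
Your proof is correct and follows essentially the same route as the paper: the trivial inclusion of $(nk,\varepsilon)$-spans into $(k,\varepsilon)$-spans for one inequality, and equicontinuity of the intermediate compositions $\textbf{\textit{f}}_{nj}^{\,r}$, $0\leq r<n$, to upgrade a $(k,\delta)$-span for $\tilde{\textbf{\textit{f}}}$ to an $(nk,\varepsilon)$-span for $\textbf{\textit{f}}$ in the other. Your explicit monotonicity argument identifying $r[ni](\varepsilon,\textbf{\textit{f}})$ with the limsup along the subsequence $N=nk$ is a detail the paper leaves implicit, and your bookkeeping $r[nk,ni](\varepsilon,\textbf{\textit{f}})\leq r[k,i](\delta,\tilde{\textbf{\textit{f}}})$ is actually cleaner than the paper's corresponding displayed inequality.
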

\begin{proof}For $i\in\mathbb{Z}$, $x,y\in M_{ni}$ and $m>0$, we have 
 \[\max_{0\leq k< m}\textbf{d}(\tilde{\textbf{\textit{f}}}_{i} ^{k}(x),\tilde{\textbf{\textit{f}}}_{i} ^{k}(y))=\max_{0\leq k< m}\textbf{d}( \textbf{\textit{f}}_{ni} ^{nk}(x), \textbf{\textit{f}}_{ni}^{nk}(y))\leq \max_{0\leq j< nm}\textbf{d}( \textbf{\textit{f}}_{ni} ^{j}(x), \textbf{\textit{f}}_{ni} ^{j}(y)).\]
 This fact proves that, for all $\varepsilon>0$,  each  $(nm,\varepsilon)$-span subset $K$ of $M_{ni}$ with respect to  \textbf{\textit{f}} is a $(m,\varepsilon)$-span subset of $M_{ni}$ with respect to $\tilde{\textbf{\textit{f}}}$.  Consequently, we obtain  $r[m,ni](\varepsilon,\tilde{\textbf{\textit{f}}})\leq r[nm,ni](\varepsilon,\textbf{\textit{f}})$.  Hence,   \( \textbf{H}_{i}(\tilde{\textbf{\textit{f}}})\leq  n\textbf{H}_{in}(\textbf{\textit{f}}).\)
 
 On the other hand, since \textbf{\textit{f}} is equicontinuous, we can prove that $(f_{ni})_{i\in \mathbb{Z}}$, $ (\textbf{\textit{f}}_{ni}^{2})_{i\in \mathbb{Z}}$,\dots, $(\textbf{\textit{f}}_{ni}^{n-1})_{i\in \mathbb{Z}}$ is a   collection of equicontinuous families. Consequently, given $\varepsilon>0$, there exists $\delta>0$ such that 
 $$\max_{\substack{1\leq k<n \\j\in\mathbb{Z}}} \{\textbf{d}(f_{nj+k}  \cdots   f_{nj+1} f_{nj}(x),f_{nj+k} \cdots f_{nj+1} f_{nj}(y)): x,y\in M_{nj}, \textbf{d}(x,y)<\delta\}<\varepsilon.$$ 
Now, if $K$ is a $(m,\delta)$-span of $M_{ni}$ with  respect to $\tilde{\textbf{\textit{f}}}$, then, for all $x\in M_{ni}$, there exists $y\in K$ such that
   \begin{equation*}  \max \{\textbf{d}(x,y),\textbf{d}( \textbf{\textit{f}}_{ni} ^{n} (x), \textbf{\textit{f}}_{ni}  ^{n}(y)), ...,\textbf{d}(\textbf{\textit{f}}_{ni} ^{(m-1)n}(x), \textbf{\textit{f}}_{ni} ^{(m-1)n}(y)) \}   
<\delta. \end{equation*}
Thus, 
  \begin{align*} \max_{0\leq k <n}&  \{ \textbf{d}(\textbf{\textit{f}}_{ni} ^{k}(x), \textbf{\textit{f}}_{ni} ^{k}(y)) \}<\varepsilon, 
  \quad \max_{0\leq k <n} \{ \textbf{d}(\textbf{\textit{f}}_{n(i+1)} ^{k}\circ\textbf{\textit{f}}_{ni} ^{n}(x),\textbf{\textit{f}}_{n(i+1)} ^{k}\circ\textbf{\textit{f}}_{ni} ^{n}(y))\}<\varepsilon, \dots,\\  
 &\max_{0\leq k <n}  \{ \textbf{d}(\textbf{\textit{f}}_{n(i+m-1)} ^{k}\circ\textbf{\textit{f}}_{ni} ^{(m-1)n}(x),\textbf{\textit{f}}_{n(i+m-1)} ^{k}\circ\textbf{\textit{f}}_{ni} ^{(m-1)n}(y))\}<\varepsilon.
 \end{align*}
 Consequently, we have 
 \begin{align*} \max_{0\leq k <n}&  \{ \textbf{d}(\textbf{\textit{f}}_{ni} ^{k}(x), \textbf{\textit{f}}_{ni} ^{k}(y))\}<\varepsilon,\quad
    \max_{0\leq k <n}  \{\textbf{d}(\textbf{\textit{f}}_{ni} ^{n+k}(x), \textbf{\textit{f}}_{ni} ^{n+k}(y)) \}<\varepsilon,\dots,\\  
 &\max_{0\leq k <n}  \{ \textbf{d}(\textbf{\textit{f}}_{ni} ^{(m-1)n+k}(x), \textbf{\textit{f}}_{ni} ^{(m-1)n+k}(y)) \}<\varepsilon.
 \end{align*}
Therefore, \[\max\{\textbf{d}(\textbf{\textit{f}}_{ni} ^{k}(x), \textbf{\textit{f}}_{ni} ^{k}(y)):k=0,...,mn-1 \}<\varepsilon,\] that is, $K$ is a $(mn,\varepsilon)$-span of $M_{ni}$ with respect to $\textbf{\textit{f}}$. Hence, we have $r[m,ni](\varepsilon,\tilde{\textbf{\textit{f}}})\geq r[nm,ni](\varepsilon,\textbf{\textit{f}})$ and, therefore,  \( \textbf{H}_{i}(\tilde{\textbf{\textit{f}}})\geq  n\textbf{H}_{in}(\textbf{\textit{f}}), \)  which proves the proposition. 
\end{proof}

From the proof of Proposition \ref{gateringn},  we have always the inequality  \[\textbf{H}_{i}(\tilde{\textbf{\textit{f}}} )\leq n\textbf{H}_{in}(\textbf{\textit{f}}).\]

\begin{propo}Suppose $ \textbf{\textit{f}}=(f_{i})_{i\in\mathbb{Z}}$  is a sequence consisting of  isometries, that is, $f_{i}:M_{i}\rightarrow M_{i+1}$ is an isometry for all $i.$ Thus $\textbf{H}_{i}( \textbf{\textit{f}})=0, $
for all $i\in\mathbb{Z}.$ 
\end{propo}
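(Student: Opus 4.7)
The plan is to use the spanning-set characterization of $\textbf{H}_i(\textbf{\textit{f}})$ and exploit the fact that isometries preserve distances, so the ``dynamical'' $(n,\varepsilon)$-ball condition collapses to an ordinary $\varepsilon$-ball condition independent of $n$.

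First I would observe that since each $f_i:M_i\to M_{i+1}$ is an isometry (with respect to $d$), every composition $\textbf{\textit{f}}_i^{\,k}$ restricted to $M_i$ is an isometry onto $M_{i+k}$. Because we have assumed the diameter of $M$ is at most $1$, the metric $\textbf{d}$ on the total space agrees with $d$ on each component $M_j$. Hence for every $x,y\in M_i$ and every $k\geq 0$,
\[
\textbf{d}\bigl(\textbf{\textit{f}}_i^{\,k}(x),\textbf{\textit{f}}_i^{\,k}(y)\bigr)=d\bigl(\textbf{\textit{f}}_i^{\,k}(x),\textbf{\textit{f}}_i^{\,k}(y)\bigr)=d(x,y)=\textbf{d}(x,y).
\]

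Next I would use this to rewrite the $(n,\varepsilon)$-spanning condition. For a compact $K\subseteq M_i$,
\[
\max_{0\leq k<n}\textbf{d}\bigl(\textbf{\textit{f}}_i^{\,k}(x),\textbf{\textit{f}}_i^{\,k}(y)\bigr)<\varepsilon \iff \textbf{d}(x,y)<\varepsilon,
\]
so $K$ is $(n,\varepsilon)$-spanning for $M_i$ with respect to $\textbf{\textit{f}}$ if and only if $K$ is an ordinary $\varepsilon$-net for $M_i$. Since $M$ (hence $M_i$) is compact, there is a finite number $N(\varepsilon)$ such that an $\varepsilon$-net of cardinality $N(\varepsilon)$ exists, and this number does not depend on $n$. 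Therefore
\[
r[n,i](\varepsilon,\textbf{\textit{f}})\leq N(\varepsilon)\quad\text{for every }n\geq 1.
\]

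From this the conclusion is immediate: for every $\varepsilon>0$,
\[
r[i](\varepsilon,\textbf{\textit{f}})=\limsup_{n\to+\infty}\frac{1}{n}\log r[n,i](\varepsilon,\textbf{\textit{f}})\leq \limsup_{n\to+\infty}\frac{\log N(\varepsilon)}{n}=0,
\]
so $r[i](\varepsilon,\textbf{\textit{f}})=0$, and letting $\varepsilon\to 0$ yields $\textbf{H}_i(\textbf{\textit{f}})=0$ for every $i\in\mathbb{Z}$. There is no real obstacle here; the only point requiring a touch of care is the compatibility of $\textbf{d}$ with $d$ on each component, which is guaranteed by the standing assumption $\operatorname{diam}(M)\leq 1$.
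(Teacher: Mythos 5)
Your proof is correct and follows the same route the paper intends: the paper's proof is simply ``it follows directly from Definition \ref{entropiametrica}'' (the spanning-set definition of $\textbf{H}_i$), and your argument is precisely the elaboration of that remark, showing that isometries make every $(n,\varepsilon)$-spanning set an ordinary $\varepsilon$-net of the compact space $M_i$, so $r[n,i](\varepsilon,\textbf{\textit{f}})$ is bounded in $n$ and the exponential growth rate vanishes. The point you flag about $\textbf{d}$ agreeing with $d$ on each component is indeed covered by the paper's standing assumption $\operatorname{diam}(M)\leq 1$.
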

\begin{proof} If follows directly from  Definition \ref{entropiametrica}.
\end{proof}
 In the following  theorem we will see that the entropy for non-autonomous dynamical systems is invariant for  uniform  conjugacies. This result generalizes the fact that the topological entropy of homeomorphisms defined on compact metric spaces is invariant by topological conjugacies. 
 \begin{teo}\label{invarianteentropia} If $(\textbf{M},\textbf{\textit{f}})$ and $(\textbf{N},\textbf{\textit{g}})$ are  uniformly    conjugate, then $ \textbf{H}_{i}( \textbf{\textit{f}})= \textbf{H}_{i}(\textbf{\textit{g}})$ for all $i\in\mathbb{Z}.$ 
\end{teo}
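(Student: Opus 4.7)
The strategy is the standard one for topological entropy, adapted to the non-autonomous setting via spanning sets: transport a spanning set for $\textbf{\textit{g}}$ through the conjugacy $\textbf{\textit{h}}^{-1}$ to obtain a spanning set (of the same cardinality) for $\textbf{\textit{f}}$, and vice versa. The decisive feature of the definition of uniform conjugacy is that $\textbf{\textit{h}}$ and $\textbf{\textit{h}}^{-1}$ are uniformly continuous on $\textbf{M}$ and $\textbf{N}$ respectively; equivalently, the equicontinuity of the sequences $(h_i)$ and $(h_i^{-1})$ gives a modulus of continuity that is uniform in the index $i$. This uniformity is precisely what will allow a single $\delta=\delta(\varepsilon)$ to control distances in every fiber simultaneously.

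More concretely, fix $i\in\mathbb{Z}$ and $\varepsilon>0$. Using the uniform continuity of $\textbf{\textit{h}}^{-1}$, choose $\delta>0$ such that for every $j\in\mathbb{Z}$ and every $u,v\in N_j$, $\textbf{d}(u,v)<\delta$ implies $\textbf{d}(h_j^{-1}(u),h_j^{-1}(v))<\varepsilon$. Let $K\subseteq N_i$ be an $(n,\delta)$-span of $N_i$ with respect to $\textbf{\textit{g}}$; I claim $h_i^{-1}(K)\subseteq M_i$ is an $(n,\varepsilon)$-span of $M_i$ with respect to $\textbf{\textit{f}}$. Indeed, the intertwining relation iterates to
\[
g_i^{\,k}\circ h_i = h_{i+k}\circ \textbf{\textit{f}}_i^{\,k}\qquad\text{for every }k\geq 0,
\]
so given $x\in M_i$, if $y\in K$ satisfies $\max_{0\le k<n}\textbf{d}(g_i^{\,k}(h_i(x)),g_i^{\,k}(y))<\delta$, then
\[
\textbf{d}\bigl(h_{i+k}(\textbf{\textit{f}}_i^{\,k}(x)),\,h_{i+k}(\textbf{\textit{f}}_i^{\,k}(h_i^{-1}(y)))\bigr)<\delta
\]
for $0\le k<n$. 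Applying $h_{i+k}^{-1}$ and using the chosen modulus $\delta$, we conclude that
$\max_{0\le k<n}\textbf{d}(\textbf{\textit{f}}_i^{\,k}(x),\textbf{\textit{f}}_i^{\,k}(h_i^{-1}(y)))<\varepsilon$, proving the claim.

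Consequently $r[n,i](\varepsilon,\textbf{\textit{f}})\le r[n,i](\delta,\textbf{\textit{g}})$ for every $n$, and taking $\limsup$ in $n$ gives $r[i](\varepsilon,\textbf{\textit{f}})\le r[i](\delta,\textbf{\textit{g}})\le \textbf{H}_i(\textbf{\textit{g}})$. Since $\delta\to 0$ may be forced by letting $\varepsilon\to 0$, passing to the limit in $\varepsilon$ yields $\textbf{H}_i(\textbf{\textit{f}})\le \textbf{H}_i(\textbf{\textit{g}})$. The reverse inequality follows by swapping the roles of $\textbf{\textit{f}}$ and $\textbf{\textit{g}}$ and using uniform continuity of $\textbf{\textit{h}}$ instead of $\textbf{\textit{h}}^{-1}$.

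The only conceptual obstacle is the one already highlighted: because the total space $\textbf{M}$ is not compact, mere continuity of each $h_i$ is not enough to produce a $\delta=\delta(\varepsilon)$ that works in every fiber at once; this is exactly why the hypothesis of \emph{uniform} (rather than merely topological) conjugacy is indispensable. Once uniform continuity is in hand, the argument is a routine transcription of the classical proof that topological entropy is a conjugacy invariant (cf.\ \cite{Walters}), with $h_{i+k}$ playing in each coordinate the role that a single conjugating homeomorphism plays in the autonomous case.
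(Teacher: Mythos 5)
Your proposal is correct and takes essentially the same approach as the paper: both transport spanning sets through the conjugacy and use the equicontinuity of $(h_i)$ and $(h_i^{-1})$ to get a single modulus $\delta(\varepsilon)$ valid in every fiber, yielding $r[n,i](\varepsilon,\textbf{\textit{f}})\le r[n,i](\delta,\textbf{\textit{g}})$ and its mirror inequality. The only (immaterial) difference is that you pull back a span of $N_i$ via $h_i^{-1}$ to prove $\textbf{H}_i(\textbf{\textit{f}})\le\textbf{H}_i(\textbf{\textit{g}})$ first, whereas the paper pushes forward a span of $M_i$ via $h_i$ to prove the reverse inequality first.
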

\begin{proof}Fix $i\in\mathbb{Z}$. Let $\textbf{\textit{h}}=(h_{i})_{i\in\mathbb{Z}}$ be a uniform conjugacy between
\textbf{\textit{f}} and \textbf{\textit{g}}. Since \textbf{\textit{h}} is equicontinuous, given $\varepsilon>0$ there exists $\delta>0$ such that, for all $j\geq i $, if $x,y\in M_{j}$ and $\textbf{d}(x,y)<\delta$, then $\textbf{d}(h_{j}(x),h_{j}(y))<\varepsilon$.   Let $K$ be a $(m,\delta)$-span of $M_{i}$ with respect to $ \textbf{\textit{f}}$. Thus, for all $x\in M_{i}$ there exists $y\in K$ such that
 $\max_{0\leq j< m}\textbf{d}(  \textbf{\textit{f}}_{i} ^{j}(x),  \textbf{\textit{f}}_{i} ^{j}(y))<\delta$.  Consequently,   if $0\leq j< m$,  \[\varepsilon>\max_{0\leq j< m}\textbf{d}(h_{i+j}\circ\textbf{\textit{f}}_{i} ^{j}(x),  h_{i+j}\circ\textbf{\textit{f}}_{i} ^{j}(y))=\max_{0\leq j< m}\textbf{d}( \textbf{\textit{g}}_{i} ^{j}\circ h_{i}(x), \textbf{\textit{g}}_{i} ^{j} \circ h_{i} (y)).\] This fact proves that $r[m,i](\varepsilon, \textbf{\textit{f}})\geq r[m,i](\delta,\textbf{\textit{g}})$. Hence, 
 $\textbf{H}_{i}( \textbf{\textit{f}})\geq  \textbf{H}_{i}(\textbf{\textit{g}})$. Since  $\textbf{\textit{h}}^{-1}$ is equicontinuous, analoguosly we can prove  that $\textbf{H}_{i}( \textbf{\textit{f}})\leq  \textbf{H}_{i}(\textbf{\textit{g}})$.
\end{proof}

 It follows from the proof of the above theorem that if $(f_{i})_{i\geq i_{0}}$ and  $(g_{i})_{i\geq i_{0}}$  are  uniformly conjugate   (see Lemma \ref{lemapositive}) then  $\textbf{H}_{i_{0}}( \textbf{\textit{f}})=  \textbf{H}_{i_{0}}(\textbf{\textit{g}})$.  Furthermore,      the entropy for homeomorphisms depends only on the   future:
  
\begin{cor}\label{entropiigual} Suppose that there exists $i_{0}\in\mathbb{Z}$ such that  $f_{j}=g_{j}$ for all $j\geq i_{0}$.  Then for all $i\in\mathbb{Z}$ we have $\textbf{H}_{i}( \textbf{\textit{f}})=  \textbf{H}_{i}(\textbf{\textit{g}})$.
\end{cor}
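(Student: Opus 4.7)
The plan is to reduce the statement to the positive uniform conjugacy observation made just before the corollary, splitting on the position of $i$ relative to $i_0$. The case $i \geq i_0$ is immediate: since $f_j = g_j$ for every $j \geq i$, the iterates $\textbf{\textit{f}}_i^n$ and $\textbf{\textit{g}}_i^n$ coincide for every $n \geq 0$, so a subset of $M_i$ is $(n,\varepsilon)$-spanning for $\textbf{\textit{f}}$ precisely when it is $(n,\varepsilon)$-spanning for $\textbf{\textit{g}}$, whence $r[n,i](\varepsilon,\textbf{\textit{f}}) = r[n,i](\varepsilon,\textbf{\textit{g}})$ and $\textbf{H}_i(\textbf{\textit{f}}) = \textbf{H}_i(\textbf{\textit{g}})$.

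Now suppose $i < i_0$. I will exhibit a positive uniform conjugacy between $(f_j)_{j \geq i}$ and $(g_j)_{j \geq i}$ and then invoke Lemma \ref{lemapositive} together with the remark preceding the corollary. Define homeomorphisms $h_j : M_j \to M_j$ for $j \geq i$ by setting $h_j = I_j$ when $j \geq i_0$, and $h_j = (\textbf{\textit{g}}_j^{\,i_0-j})^{-1} \circ \textbf{\textit{f}}_j^{\,i_0-j}$ when $i \leq j < i_0$. A direct check verifies the intertwining identity $h_{j+1} \circ f_j = g_j \circ h_j$ for every $j \geq i$: for $j \geq i_0$ both sides reduce to $f_j = g_j$, and for $i \leq j < i_0$ one unwinds the definition using $(\textbf{\textit{g}}_j^{\,i_0-j})^{-1} = g_j^{-1} \circ (\textbf{\textit{g}}_{j+1}^{\,i_0-j-1})^{-1}$.

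Finally, only finitely many of the $h_j$ (those indexed by $i \leq j < i_0$) differ from the identity, and each of these is a homeomorphism of a compact space, hence uniformly continuous. Consequently both $(h_j)_{j \geq i}$ and $(h_j^{-1})_{j \geq i}$ are equicontinuous, so by Lemma \ref{lemapositive} the systems $(f_j)_{j \geq i}$ and $(g_j)_{j \geq i}$ are positively uniformly conjugate; the remark preceding the corollary then yields $\textbf{H}_i(\textbf{\textit{f}}) = \textbf{H}_i(\textbf{\textit{g}})$. I expect no substantive obstacle here; the only real check is the intertwining relation, which amounts to routine bookkeeping with the definitions of the iterates $\textbf{\textit{f}}_j^k$ and $\textbf{\textit{g}}_j^k$.
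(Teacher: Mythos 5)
Your proof is correct and follows essentially the same route as the paper: both reduce the statement to a positive uniform conjugacy given by $h_j = I_j$ for $j \geq i_0$, extend it to indices $j \geq i$ via Lemma \ref{lemapositive}, and conclude with the one-sided version of Theorem \ref{invarianteentropia}. The only difference is that you explicitly write out the backward extension $h_j = (\textbf{\textit{g}}_j^{\,i_0-j})^{-1}\circ \textbf{\textit{f}}_j^{\,i_0-j}$ and verify the intertwining and equicontinuity (in effect proving the needed direction of Lemma \ref{lemapositive}, which the paper states without proof), and you treat the case $i \geq i_0$ separately by a direct spanning-set comparison.
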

\begin{proof}It is clear that $(f_{j})_{j\geq i_{0}}$ and  $(g_{j})_{j\geq i_{0}}$ are uniformly conjugate (take $h_{j}=Id$ for each $j\geq i_{0}$).  It follows from Lemma \ref{lemapositive} that, for any $i\in\mathbb{Z}$, $(f_{j})_{j\geq i}$ and  $(g_{j})_{j\geq i}$ are uniformly conjugate. By the proof of the Theorem \ref{invarianteentropia}   we have $\textbf{H}_{i}( \textbf{\textit{f}})=  \textbf{H}_{i}(\textbf{\textit{g}})$ for all $i\in\mathbb{Z}$.
\end{proof}

\begin{cor}\label{corolarioigualdad} For all $i,j\in\mathbb{Z}$ we have $\textbf{H}_{i}( \textbf{\textit{f}})=  \textbf{H}_{j}(\textbf{\textit{f}})$.
\end{cor}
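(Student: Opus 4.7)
By induction, it suffices to establish the equality $\textbf{H}_{i}(\textbf{\textit{f}}) = \textbf{H}_{i+1}(\textbf{\textit{f}})$ for every $i \in \mathbb{Z}$. The plan is to compare $(n,\varepsilon)$-spanning sets of $M_{i}$ and of $M_{i+1}$ by transporting them through the homeomorphism $f_{i}:M_{i}\to M_{i+1}$, exploiting the intertwining identity
\[
\textbf{\textit{f}}_{i}^{\,k}(x) \;=\; \textbf{\textit{f}}_{i+1}^{\,k-1}\bigl(f_{i}(x)\bigr) \qquad \text{for every } x\in M_{i} \text{ and } k\geq 1.
\]

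For the inequality $\textbf{H}_{i+1}(\textbf{\textit{f}}) \leq \textbf{H}_{i}(\textbf{\textit{f}})$: given an $(n+1,\varepsilon)$-span $K$ of $M_{i}$ with respect to $\textbf{\textit{f}}$, I would check that $f_{i}(K)$ is an $(n,\varepsilon)$-span of $M_{i+1}$. For $y\in M_{i+1}$, set $x=f_{i}^{-1}(y)$ and pick $z\in K$ with $\textbf{d}(\textbf{\textit{f}}_{i}^{\,k}(z),\textbf{\textit{f}}_{i}^{\,k}(x))<\varepsilon$ for $0\leq k\leq n$; the identity above translates the bounds at $k=1,\dots,n$ into the required spanning estimate at $f_{i}(z)$ for $y$. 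Hence $r[n,i+1](\varepsilon,\textbf{\textit{f}}) \leq r[n+1,i](\varepsilon,\textbf{\textit{f}})$, and taking $\limsup$ (using $(n+1)/n\to 1$) followed by $\varepsilon\to 0$ gives this direction.

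For the reverse inequality $\textbf{H}_{i}(\textbf{\textit{f}}) \leq \textbf{H}_{i+1}(\textbf{\textit{f}})$: since $M$ is compact, the homeomorphism $f_{i}^{-1}$ is uniformly continuous, so given $\varepsilon>0$ I choose $0<\delta<\varepsilon$ with $\textbf{d}(u,v)<\delta \Rightarrow \textbf{d}(f_{i}^{-1}(u),f_{i}^{-1}(v))<\varepsilon$. If $K'$ is an $(n,\delta)$-span of $M_{i+1}$, I would show that $f_{i}^{-1}(K')$ is an $(n+1,\varepsilon)$-span of $M_{i}$: for $x\in M_{i}$, the point $z'\in K'$ selected by $f_{i}(x)\in M_{i+1}$ satisfies the bound at $k=0$ by the choice of $\delta$, namely $\textbf{d}(f_{i}^{-1}(z'),x)<\varepsilon$, while the bounds at $k=1,\dots,n$ again come from the intertwining identity. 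This yields $r[n+1,i](\varepsilon,\textbf{\textit{f}}) \leq r[n,i+1](\delta,\textbf{\textit{f}})$; the analogous $\limsup$ manipulation and $\varepsilon,\delta\to 0$ complete the proof.

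The only delicate point is the $k=0$ step in the second direction: at the positive indices $k\geq 1$ the intertwining identity transports distance bounds for free, but at $k=0$ one is forced to pull an $M_{i+1}$-distance back through $f_{i}^{-1}$, which is precisely where uniform continuity of $f_{i}^{-1}$—guaranteed by compactness of $M$—enters the argument. The remaining steps are routine bookkeeping with $\limsup$.
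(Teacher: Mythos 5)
Your argument is correct, but it follows a genuinely different route from the paper's. The paper does not touch spanning sets at this point: it introduces the auxiliary family $\textbf{\textit{g}}$ with $g_{j}=I$ for $j\leq i$ and $g_{j}=f_{j}$ for $j>i$, invokes Corollary \ref{entropiigual} (entropy depends only on the future, itself a consequence of the conjugacy-invariance Theorem \ref{invarianteentropia}) to get $\textbf{H}_{i}(\textbf{\textit{f}})=\textbf{H}_{i}(\textbf{\textit{g}})$ and $\textbf{H}_{i+1}(\textbf{\textit{f}})=\textbf{H}_{i+1}(\textbf{\textit{g}})$, and then observes that since the connecting map $g_{i}$ is the identity (an isometry), the $(n,\varepsilon)$-data at level $i$ and the $(n-1,\varepsilon)$-data at level $i+1$ coincide exactly, with no $\delta$ needed. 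Your proof instead transports spanning sets directly through $f_{i}$, paying for the $k=0$ term in the backward direction with the uniform continuity of $f_{i}^{-1}$ on the compact $M_{i+1}$ --- you have correctly isolated this as the only delicate point, and your handling of it (choosing $\delta<\varepsilon$ so that the pullback of a $\delta$-ball lands in an $\varepsilon$-ball, then letting $\delta\to 0$ together with $\varepsilon$ using monotonicity of $r[i+1](\cdot)$) is sound, as is the $(n+1)/n\to 1$ bookkeeping. What each approach buys: the paper's is shorter and reuses machinery already built for Theorem \ref{invarianteentropia}, whereas yours is self-contained, does not depend on Corollary \ref{entropiigual} or Lemma \ref{lemapositive}, and makes explicit that the shift $i\mapsto i+1$ is really a uniform conjugacy with a one-step time lag --- essentially a hands-on instance of the same invariance principle.
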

\begin{proof} It is sufficient to prove that $\textbf{H}_{i}( \textbf{\textit{f}})=  \textbf{H}_{i+1}(\textbf{\textit{f}})$ for all $i\in\mathbb{Z}$. Fix $i\in\mathbb{Z}$.  Take the family $\textbf{\textit{g}}=(g_{j})_{j\in \mathbb{Z}}$, where $g_{j}=I: M_{j}\rightarrow M_{j+1}$ for each $j\leq i$,  the identity on $M$, and $g_{j}=f_{j}$ for $j>i$.   Thus $\textbf{H}_{i}( \textbf{\textit{f}})=  \textbf{H}_{i}(\textbf{\textit{g}})$. For each $x,y\in M_{i}$ and $n\geq 2$  we have $$\max_{0\leq j< n}\textbf{d}( \textbf{\textit{g}}_{i} ^{j}(x), \textbf{\textit{g}}_{i} ^{j}(y))= \max_{0\leq j< n-1}\textbf{d}( \textbf{\textit{g}}_{i+1} ^{j}(x), \textbf{\textit{g}}_{i+1} ^{j}(y)).$$
Using this fact we can prove that  $\textbf{H}_{i}( \textbf{\textit{g}})=  \textbf{H}_{i+1}(\textbf{\textit{g}})$. Consequently, we have that  $\textbf{H}_{i}( \textbf{\textit{f}})=  \textbf{H}_{i+1}(\textbf{\textit{f}})$.
\end{proof}

\begin{obs}\label{remarkunmap} We can consider the sequence $\textbf{\textit{f}}=(f_{i})_{i\in\mathbb{Z}}$ as a homeomorphism $\textbf{\textit{f}}:\textbf{M}\rightarrow \textbf{M}$ and then calculate the topological entropy of $h(\textbf{\textit{f}})$ via spanning or separated sets of \textbf{M}.  It is not difficult to prove that $h(\textbf{\textit{f}})= \textbf{H}_{i}( \textbf{\textit{f}})$ for any $i$. Hence, from now on we will   omit the index $i$ of $\textbf{H}_{i}$ and we will   consider the entropy of a non-autonomous dynamical system as a single number, as a consequence of Corollary \ref{corolarioigualdad}.\end{obs}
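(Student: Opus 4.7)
The plan is to use Bowen's definition of the topological entropy of a continuous map on a non-compact metric space, namely $h(\textbf{\textit{f}}) = \sup_K h(\textbf{\textit{f}}, K)$, where $K$ ranges over the compact subsets of $\textbf{M}$ and $h(\textbf{\textit{f}}, K)$ is computed via $(n,\varepsilon)$-spanning sets for $K$. The key structural fact I would exploit first is that the components $M_i$ are clopen in $\textbf{M}$ with pairwise $\textbf{d}$-distance equal to $1$; consequently every compact $K \subseteq \textbf{M}$ is contained in a finite union $M_{i_1}\cup\cdots\cup M_{i_\ell}$, and for any $\varepsilon<1$ a $(n,\varepsilon)$-spanning set for a subset of $M_i$ must itself lie inside $M_i$.

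I would first establish the lower bound $h(\textbf{\textit{f}})\geq \textbf{H}_i(\textbf{\textit{f}})$ for every $i$ by taking $K=M_i$. Because $\textbf{\textit{f}}^k$ sends $M_i$ to $M_{i+k}$ and coincides with the non-stationary composition $\textbf{\textit{f}}_i^k$ of Definition \ref{leidecomposicao}, and because $\textbf{d}$ restricted to any single $M_j$ agrees with $d$ (we have assumed $\operatorname{diam}(M)\leq 1$), the Bowen spanning number of $M_i$ for the map $\textbf{\textit{f}}$ equals the quantity $r[n,i](\varepsilon,\textbf{\textit{f}})$ from Section 4, for all $\varepsilon<1$. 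Passing to the $\limsup$ in $n$ and then letting $\varepsilon\to 0$ yields $h(\textbf{\textit{f}},M_i)=\textbf{H}_i(\textbf{\textit{f}})$.

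For the reverse direction, I would fix an arbitrary compact $K\subseteq \textbf{M}$, write $K\subseteq M_{i_1}\cup\cdots\cup M_{i_\ell}$, and use the obvious subadditivity
\[
r_n(\varepsilon,K)\;\leq\;\sum_{j=1}^\ell r_n(\varepsilon,M_{i_j})\;=\;\sum_{j=1}^\ell r[n,i_j](\varepsilon,\textbf{\textit{f}})
\]
obtained by unioning individual spanning sets. Applying $\log(a_1+\cdots+a_\ell)\leq \log \ell+\max_j\log a_j$, dividing by $n$ (so the $\log\ell$ term vanishes), taking $\limsup_{n\to\infty}$, and then sending $\varepsilon\to 0$ gives $h(\textbf{\textit{f}},K)\leq \max_j \textbf{H}_{i_j}(\textbf{\textit{f}})$. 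By Corollary \ref{corolarioigualdad} the right-hand side equals $\textbf{H}_i(\textbf{\textit{f}})$ for any fixed $i$, so taking the supremum over $K$ yields $h(\textbf{\textit{f}})\leq \textbf{H}_i(\textbf{\textit{f}})$ and closes the loop.

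The only point requiring a moment of care is that $\textbf{M}$ is not compact, so one must rely on the Bowen definition through compact subsets rather than the Adler--Konheim--McAndrew open-cover definition; this is why the step of reducing a general compact $K$ to a finite union of components is essential. Beyond that reduction, every inequality is routine and follows the single-map arguments in \cite{Walters}, which is presumably why the author records this as a remark rather than a theorem.
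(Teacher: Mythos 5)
Your argument is correct, and it fills in exactly the computation the paper leaves implicit: the remark offers no proof beyond asserting the identity "via spanning or separated sets of \textbf{M}", and your reduction of an arbitrary compact $K\subseteq\textbf{M}$ to finitely many components $M_{i_j}$ at mutual distance $1$, combined with Corollary \ref{corolarioigualdad} to collapse $\max_j \textbf{H}_{i_j}(\textbf{\textit{f}})$ to a single value, is the natural (and essentially the intended) route. The only point worth flagging is cosmetic: since $\textbf{\textit{f}}$ need not be uniformly continuous on $\textbf{M}$, one should note that the quantities $r_n(\varepsilon,K)$ are still finite by compactness of $K$ and continuity of each $\textbf{\textit{f}}^{\,j}|_{K}$, so Bowen's supremum-over-compacta definition is well posed here.
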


 Remember that I am  fixing one metric $d$ on $M$ and then  considering the metric on the total space as in \eqref{metricatotal1}. If we consider another metric $\tilde{d}$ uniformly equivalent to $d$ on  $M$, then the identity  \begin{align*}I: (\textbf{M},\textbf{d})&\rightarrow (\textbf{M},\tilde{\textbf{d}}) \\
p&\mapsto p
\end{align*} 
is  a uniformly continuous map. It follows from Theorem 7.4 in \cite{Walters} that  the topological entropy of  $\textbf{\textit{f}}$  considering the metric $\tilde{d}$ on $M$ coincides with the topological entropy of  $\textbf{\textit{f}}$ considering $d$ on $M$. If follows that the entropy for a non-autonomous dynamical system on \textbf{M} does not depend on equivalent metrics on $M$.

\medskip
 
We can define the inverse of $\textbf{\textit{f}}$ as   $ \textbf{\textit{f}}^{-1}=(g _{i})_{i\in\mathbb{Z}}$, where $g _{i}:=f_{i}^{-1}:M_{i+1}\rightarrow M_{i}$ for each $i$. 
In this case, \((\textbf{\textit{f}}^{-1})_{ i} ^{0}:= I_{i+1}:M_{i+1}\rightarrow M_{i+1}\)  and \(
   (\textbf{\textit{f}}^{-1})_{ i} ^{ n}:= g_{i-n+1}\circ \cdots\circ g_{i}:M_{i+1}\rightarrow M_{i-n+1}\)  for \(n>0.\) 
 In the case of a single homeomorphism $\phi:X\rightarrow X$, we have $h(\phi)=h(\phi^{-1})$   (see \cite{Walters}, Theorem 7.3).  The following example proves that, in general,  we could have $\textbf{H}(\textbf{\textit{f}})\neq \textbf{H} (\textbf{\textit{f}}^{-1})$. 
 \begin{ej}\label{ejemplootraentropia} Let $I:M\rightarrow M$ be the identity on $M$  and $\phi:M\rightarrow M$ be  a homeomorphism on $M$ with non-zero topological entropy. Let $f_{i}:M_{i}\rightarrow M_{i+1}$    be the diffeomorphisms  defined  as 
 \(f_{i} =I\) for \(i\geq0\)  and \(f_{i} =
\phi \) for \(i<0\)
and take  $\textbf{\textit{f}}=(f_{i})_{i\in\mathbb{Z}}$.  
From Corollary \ref{entropiigual} we have  $\textbf{H}(\textbf{\textit{f}} )=h(I)=0$ and  $\textbf{H}(\textbf{\textit{f}}^{-1})=h(\phi)\neq0$, for each $i\in\mathbb{Z}$. 
\end{ej}

\begin{obs}\label{obsentropiainversa} As a consequence of Example \ref{ejemplootraentropia},  we   can also consider  the  entropy $ \textbf{H}(\textbf{\textit{f}}^{-1})$, which we denote by $\textbf{H}^{(-1)}(\textbf{\textit{f}})$. All the above results for $\textbf{H} $  have analogous versions for $\textbf{H}^{(-1)}$.
\end{obs}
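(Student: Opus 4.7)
The plan is to realize $\textbf{H}^{(-1)}(\textbf{\textit{f}})$ as the forward entropy $\textbf{H}$ of a suitably relabeled non-autonomous system, so that every previous statement for $\textbf{H}$ transports mechanically. Although $\textbf{\textit{f}}^{-1}=(f_i^{-1})_{i\in\mathbb{Z}}$ with $f_i^{-1}:M_{i+1}\to M_i$ runs through the components of $\textbf{M}$ in reverse order, I can re-enumerate by setting $\tilde{M}_i:=M_{-i+1}$ and $\tilde{f}_i:=f_{-i}^{-1}:\tilde{M}_i\to\tilde{M}_{i+1}$. Then $\tilde{\textbf{\textit{f}}}=(\tilde{f}_i)_{i\in\mathbb{Z}}$ is a bona fide non-autonomous dynamical system on $\tilde{\textbf{M}}=\coprod_i\tilde{M}_i$ in the sense of Definition \ref{leidecomposicao}, and a direct unwinding gives $\tilde{\textbf{\textit{f}}}_i^{\,n}=(\textbf{\textit{f}}^{-1})_{-i}^{\,n}$ under the identification $\tilde{M}_i=M_{-i+1}$.

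The next step is to match the $(n,\varepsilon)$-spanning (respectively, separated) sets of $\tilde{M}_i$ with respect to $\tilde{\textbf{\textit{f}}}$ with those of $M_{-i+1}$ with respect to $\textbf{\textit{f}}^{-1}$: the two collections coincide as sets because the relabeling is a bijective isometric identification of the components and the composition rule is preserved. Consequently $r[n,i](\varepsilon,\tilde{\textbf{\textit{f}}})$ and $s[n,i](\varepsilon,\tilde{\textbf{\textit{f}}})$ agree with their counterparts for $\textbf{\textit{f}}^{-1}$, and so $\textbf{H}_i(\tilde{\textbf{\textit{f}}})$ coincides with the backward entropy at the corresponding index. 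Applying Corollary \ref{corolarioigualdad} to $\tilde{\textbf{\textit{f}}}$ then collapses the index dependence, and $\textbf{H}^{(-1)}(\textbf{\textit{f}})=\textbf{H}(\tilde{\textbf{\textit{f}}})$ becomes a well-defined single number.

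With this identification in hand, every statement previously proved for $\textbf{H}$ produces an analogous statement for $\textbf{H}^{(-1)}$ by application to the tilde system. For the key invariance result, Theorem \ref{invarianteentropia}, a uniform conjugacy $\textbf{\textit{h}}=(h_i)_{i\in\mathbb{Z}}$ between $\textbf{\textit{f}}$ and $\textbf{\textit{g}}$ induces the family $\tilde{\textbf{\textit{h}}}=(h_{-i+1})_{i\in\mathbb{Z}}$ between $\tilde{\textbf{\textit{f}}}$ and $\tilde{\textbf{\textit{g}}}$; the intertwining $h_{i+1}\circ f_i=g_i\circ h_i$ inverts to $h_i\circ f_i^{-1}=g_i^{-1}\circ h_{i+1}$, which is exactly the conjugacy equation for the tilde systems, while equicontinuity is preserved because the class of equicontinuous families is closed under reindexing by a bijection of $\mathbb{Z}$. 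Corollary \ref{entropiigual} transfers to a past-tail dependence statement, and Corollary \ref{corolarioigualdad} becomes the index-independence of $\textbf{H}^{(-1)}$.

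The argument is essentially index bookkeeping; the one step where something beyond relabeling is required is the verification that passing from $\textbf{\textit{h}}$ to its time-reversed analogue preserves uniformity, which works precisely because the defining property of a uniform conjugacy is symmetric in $\textbf{\textit{h}}$ and $\textbf{\textit{h}}^{-1}$. Once this symmetry is observed, no new estimate is needed, and the remark follows.
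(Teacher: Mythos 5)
Your relabeling argument is correct: the index bookkeeping ($\tilde M_i = M_{-i+1}$, $\tilde f_i = f_{-i}^{-1}$, hence $\tilde{\textbf{\textit{f}}}_i^{\,n}=(\textbf{\textit{f}}^{-1})_{-i}^{\,n}$) checks out, the spanning/separated sets match under the isometric identification of components, and the inversion of $h_{i+1}\circ f_i=g_i\circ h_i$ into $h_i\circ f_i^{-1}=g_i^{-1}\circ h_{i+1}$ together with the symmetry of uniform conjugacy is exactly what makes Theorem \ref{invarianteentropia} transfer. The paper states this remark without any proof, so your write-up simply supplies, along the intended lines, the routine verification the author leaves implicit.
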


There are dynamical systems defined on a compact metric space that are not topologically conjugate but they have the same topological entropy. Now,   from Theorem \ref{invarianteentropia} we have that  two constant families associated to homeomorphisms with different topological entropies can not be uniformly conjugate.  
 On the other hand, in  \cite{Jeo3} is proved that  there are constant families, associated to homeomorphisms with the same topological entropy, that can be uniformly topologically conjugate. One natural question  that arise from this notion of entropy are as follows:
  Let $(\textbf{M},\textbf{\textit{f}})$ and $(\textbf{M},\textbf{\textit{g}})$ be constant families. If $\textbf{H}( \textbf{\textit{f}})=  \textbf{H} (\textbf{\textit{g}})$    then \textbf{\textit{f}} and \textbf{\textit{g}}   are always uniformly  conjugate? The answer   is negative, as shows the following example:
 \begin{ej} Let $\psi:X\rightarrow X$ be a homeomorphism on a metric space $X$ with metric $\rho.$ For  $x\in X$,  set 
 \[W^{s}(x,\psi)=\{y\in X: \rho(\psi^{n}(x), \psi^{n}(y))\rightarrow 0 \text{ as }n \rightarrow +\infty\} .\]
 This set is called the \text{stable set for} $\psi$  at $x.$ In  \cite{Jeo3}  is proved that, if $\textbf{\textit{h}}=(h_{i})_{i\in\mathbb{Z}}$ is a uniform conjugacy between  $(\textbf{M},  \textbf{\textit{f}})$ and $(\textbf{N},  \textbf{\textit{g}})$, then, for each $x\in M_{i}$, we have \[h_{i}(W^{s}(x,\textbf{\textit{f}})) = W^{s} (h_{i}(x),\textbf{\textit{g}})\quad\text{ and }\quad h_{i}(W^{u}(x,\textbf{\textit{f}})) = W^{u} (h_{i}(x),\textbf{\textit{g}}).\] 
 
   Let $M$ be $\mathbb{S}^{1}$, $p_{N}$ be the north pole and $p_{S}$ be the south pole of $\mathbb{S}^{1}$. Suppose that $\phi:M\rightarrow M$ is a homeomorphism  with stable set $ W^{s}(p_{N},\phi)=M\setminus \{p_{S}\}$. Let $\textbf{\textit{f}}$ and $\textbf{\textit{g}}$ be the   constant families associated to $\phi$ and to the identity on $M$, respectively. Then $\textbf{H}( \textbf{\textit{f}})=  \textbf{H}(\textbf{\textit{g}})=0$ for all $i\in\mathbb{Z}$, because all the homeomorphisms on the circle has zero entropy (see \eqref{igualdadeentropiassd}).  On the other hand, we have  $W^{s}((p_{N},0),\textbf{\textit{f}}) = [M\setminus \{p_{S}\}]\times \{0\}$  and $W^{s}((p_{N},0),\textbf{\textit{g}}) = \{(p_{N}, 0)\}.$ Since the uniform conjugacies preserve the stable sets, we have that  $\textbf{\textit{f}}$ and $\textbf{\textit{g}}$ can not be uniformly conjugate.  
\end{ej}

\section{On the Continuity of the Entropy}

 Finally we will  see that the entropy is continuous considering the strong topology on $\text{F}^{r}(\textbf{M})$, for  $r\geq  1$. More specifically, the entropy  is locally constant, that is,   each $(f_{i})_{i\in\mathbb{Z}}\in \text{F}^{r}(\textbf{M})$  has a strong basic neighborhood in which the entropy is constant.   In contrast,  on the continuity of $\textbf{H}:(\text{F}^{r}(\textbf{M}), \tau_{prod})\rightarrow \mathbb{R}\cup \{+\infty\}$, we have: 

\begin{propo} Suppose that $\textbf{H}(\text{F}^{r}(\textbf{M}))$ has two or more elements. Then $\textbf{H}:(\text{F}^{r}(\textbf{M}), \tau_{prod})\rightarrow \mathbb{R}\cup \{+\infty\}$ is discontinuous at any $\textbf{\textit{f}}\in \text{CF}^{r}(\textbf{M})$. 
\end{propo}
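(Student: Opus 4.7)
The plan is to exploit the key feature of the product topology: any basic neighborhood of $\textbf{\textit{f}}$ constrains only finitely many coordinates $f_i$, while all coordinates with $|i|$ large remain entirely free to range over $\text{Diff}^r(M_i,M_{i+1})$. Combined with Corollary \ref{entropiigual}, which tells us that the entropy depends only on the forward tail of the sequence, this lets me place, inside any basic neighborhood of $\textbf{\textit{f}}$, a sequence whose entropy is prescribed in advance.

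First, I would invoke the hypothesis to fix two sequences $\textbf{\textit{g}}^{(1)},\textbf{\textit{g}}^{(2)}\in \text{F}^{r}(\textbf{M})$ with $\textbf{H}(\textbf{\textit{g}}^{(1)})\neq \textbf{H}(\textbf{\textit{g}}^{(2)})$. Writing $c=\textbf{H}(\textbf{\textit{f}})$, at least one of the two values $\textbf{H}(\textbf{\textit{g}}^{(k)})$, say $c':=\textbf{H}(\textbf{\textit{g}})$, satisfies $c'\neq c$. Now, given any basic neighborhood $\mathcal{U}$ of $\textbf{\textit{f}}$ as in Definition \ref{producttopo}, determined by some $j\in\mathbb{N}$ and open sets $U_i\subseteq\text{Diff}^r(M_i,M_{i+1})$ for $-j\leq i\leq j$, I would define the spliced sequence $\textbf{\textit{h}}=(h_i)_{i\in\mathbb{Z}}$ by
\[
h_i=\begin{cases} f_i & \text{if } i\leq j,\\ g_i & \text{if } i>j. \end{cases}
\]
For each $-j\leq i\leq j$ we have $h_i=f_i\in U_i$, so $\textbf{\textit{h}}\in\mathcal{U}$. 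Since $h_i=g_i$ for every $i\geq j+1$, applying Corollary \ref{entropiigual} with $i_0=j+1$ yields $\textbf{H}(\textbf{\textit{h}})=\textbf{H}(\textbf{\textit{g}})=c'$.

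Finally, to conclude discontinuity, I would separate $c$ and $c'$ in $\mathbb{R}\cup\{+\infty\}$ by disjoint open sets $V\ni c$ and $V'\ni c'$ (the one-point compactification of $\mathbb{R}$ at $+\infty$ is Hausdorff). If $\textbf{H}$ were continuous at $\textbf{\textit{f}}$, then $\textbf{H}^{-1}(V)$ would be an open set containing $\textbf{\textit{f}}$, hence would contain some basic product neighborhood $\mathcal{U}$ of $\textbf{\textit{f}}$. But the construction above produces $\textbf{\textit{h}}\in\mathcal{U}$ with $\textbf{H}(\textbf{\textit{h}})=c'\in V'$, contradicting $\textbf{H}(\mathcal{U})\subseteq V$.

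I do not foresee a genuine obstacle: the product topology is, by design, too coarse to detect the tail of the sequence, whereas the entropy is entirely tail-determined by Corollary \ref{entropiigual}. The only small point to verify is that the splicing produces a valid element of $\text{F}^{r}(\textbf{M})$, which is immediate since each $f_i$ and each $g_i$ is already a $C^r$-diffeomorphism between the appropriate copies of $M$, and no compatibility condition between consecutive diffeomorphisms is required in the definition of $\text{F}^r(\textbf{M})$.
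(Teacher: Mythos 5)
Your proposal is correct and follows essentially the same route as the paper: splice $\textbf{\textit{f}}$ with a sequence $\textbf{\textit{g}}$ of different entropy outside the finitely many coordinates constrained by a basic product neighborhood, and invoke Corollary \ref{entropiigual} to see that the spliced sequence inherits the entropy of $\textbf{\textit{g}}$. Your extra care in extracting a single $\textbf{\textit{g}}$ with $\textbf{H}(\textbf{\textit{g}})\neq\textbf{H}(\textbf{\textit{f}})$ from the two-element hypothesis, and in separating the two entropy values by open sets in $\mathbb{R}\cup\{+\infty\}$, only makes the argument slightly more explicit than the paper's.
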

\begin{proof} Let $\textbf{\textit{f}}=(f_{i})_{i\in\mathbb{Z}}\in \text{F}^{r}(\textbf{M}).$ Since $\textbf{H}(\text{F}^{r}(\textbf{M}))$ has two or more elements, there exists $\textbf{\textit{g}}=(g_{i})_{i\in\mathbb{Z}}\in \text{F}^{r}(\textbf{M}) $ such that 
$\textbf{H}(\textbf{\textit{g}})\neq \textbf{H}(\textbf{\textit{f}}).$ Let $\mathcal{V}\in \tau_{prod}$ an open neighborhood of $\textbf{\textit{f}}$. For some $k\in \mathbb{N}$, the family $\textbf{\textit{h}}=(h_{i})_{i\in\mathbb{Z}}$, defined by
\begin{equation*}h_{i}=  
\begin{cases}
  f_{i}  & \mbox{if }-k\leq i\leq k \\
  g_{i} & \mbox{if }i>k \mbox{ or }i<-k, \\ 
        \end{cases} 
\end{equation*}
belongs to $\mathcal{V}$, by definition of $\tau_{prod}$. It is follow from Corollary \ref{entropiigual} that \[\textbf{H}(\textbf{\textit{h}})= \textbf{H}(\textbf{\textit{g}}),\]
which proves the proposition, since $(\text{F}^{r}(\textbf{M}), \tau_{prod})$ a metric space.  
\end{proof}

Set \[\text{CF}^{r}(\textbf{M})=\{\textbf{\textit{f}}\in  \text{F}^{r}(\textbf{M}): \textbf{\textit{f}}\text{ is a constant family}\},\]
  $\tilde{\tau}_{str}=\tau_{str}|_{\text{CF}^{r}(\textbf{M})}$  and $\tilde{\tau}_{prod}=\tau_{prod}|_{\text{CF}^{r}(\textbf{M})}.$ 
  
\begin{propo}\label{igualdadtopologias}  $\tilde{\tau}_{str}=\mathcal{P}(\text{CF}^{r}(\textbf{M}))=\{A:A\subseteq \text{CF}^{r}(\textbf{M})\}.$   
\end{propo}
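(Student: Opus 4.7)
The plan is to show that the subspace topology $\tilde{\tau}_{str}$ on $\text{CF}^r(\textbf{M})$ is the discrete topology, which is equivalent to the claim. Since a topology equals the power set of the underlying space if and only if every singleton is open, it suffices to prove that for each $\textbf{\textit{f}}\in\text{CF}^r(\textbf{M})$ there is a strong basic neighborhood $B^r(\textbf{\textit{f}},\varepsilon)$ whose intersection with $\text{CF}^r(\textbf{M})$ reduces to $\{\textbf{\textit{f}}\}$.

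First I would observe the key rigidity property of constant families. If $\textbf{\textit{f}}=(f_i)_{i\in\mathbb{Z}}$ and $\textbf{\textit{g}}=(g_i)_{i\in\mathbb{Z}}$ are the constant families associated respectively to diffeomorphisms $\phi,\psi:M\to M$, then via the canonical identification $M_i\cong M$ each $f_i$ is a copy of $\phi$ and each $g_i$ is a copy of $\psi$. Consequently $d^r(f_i,g_i)=d^r(\phi,\psi)$ for every $i\in\mathbb{Z}$, i.e., the sequence $(d^r(f_i,g_i))_{i\in\mathbb{Z}}$ is constant. This is the crucial feature that fails for general sequences in $\text{F}^r(\textbf{M})$ and is what forces the discrete structure on $\text{CF}^r(\textbf{M})$.

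Next I would exploit the freedom in Definition \ref{strongtopo} to let the tolerance decay. Fix $\textbf{\textit{f}}\in\text{CF}^r(\textbf{M})$ and choose any sequence of positive numbers $\varepsilon=(\varepsilon_i)_{i\in\mathbb{Z}}$ with $\varepsilon_i\to 0$ as $|i|\to\infty$, for instance $\varepsilon_i=1/(|i|+1)$. Suppose $\textbf{\textit{g}}\in B^r(\textbf{\textit{f}},\varepsilon)\cap\text{CF}^r(\textbf{M})$, with $\textbf{\textit{g}}$ the constant family of some $\psi$. By the observation above, $d^r(\phi,\psi)=d^r(f_i,g_i)<\varepsilon_i$ for every $i$; letting $|i|\to\infty$ forces $d^r(\phi,\psi)=0$, hence $\psi=\phi$ and $\textbf{\textit{g}}=\textbf{\textit{f}}$. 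Thus $\{\textbf{\textit{f}}\}=B^r(\textbf{\textit{f}},\varepsilon)\cap\text{CF}^r(\textbf{M})\in\tilde{\tau}_{str}$.

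Since every singleton of $\text{CF}^r(\textbf{M})$ is open in $\tilde{\tau}_{str}$, every subset (being a union of singletons) is open, giving $\tilde{\tau}_{str}=\mathcal{P}(\text{CF}^r(\textbf{M}))$. There is no real obstacle here; the only subtlety is verifying that the product metric at index $i$ genuinely yields a constant for constant families, which is immediate from the definition in Example \ref{levantamento}. The contrast with $\tilde{\tau}_{prod}$ is also transparent from this plan: that topology only controls finitely many coordinates at a time, so the same diagonal-shrinking argument is unavailable there.
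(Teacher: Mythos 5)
Your proof is correct and follows the same route as the paper: both arguments show each singleton $\{\textbf{\textit{f}}\}$ is open in $\tilde{\tau}_{str}$ by intersecting $\text{CF}^{r}(\textbf{M})$ with a strong basic neighborhood $B^{r}(\textbf{\textit{f}},(\varepsilon_{i})_{i\in\mathbb{Z}})$ whose tolerances satisfy $\varepsilon_{i}\to 0$ as $|i|\to\infty$. You additionally spell out the rigidity step (that $d^{r}(f_{i},g_{i})$ is constant in $i$ for constant families), which the paper leaves implicit.
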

  \begin{proof}It is sufficient to prove that each $\{ (f_{i})_{i\in\mathbb{Z}}\}$, with $ (f_{i})_{i\in\mathbb{Z}}\in \text{CF}^{r}(\textbf{M})$, is open  in $\text{CF}^{r}(\textbf{M})$. Let $(\varepsilon_{i})_{i\in\mathbb{Z}}$ be a sequence of positive numbers with $\varepsilon_{i}\rightarrow 0$ as $|i|\rightarrow \pm \infty$. Consider the strong basic neighborhood $B^{r}((f_{i})_{i\in\mathbb{Z}},(\varepsilon_{i})_{i\in\mathbb{Z}})\subseteq \text{F}^{r}(\textbf{M})$ of $(f_{i})_{i\in\mathbb{Z}}$.  Notice that \[\{ (f_{i})_{i\in\mathbb{Z}}\}= B^{r}((f_{i})_{i\in\mathbb{Z}},(\varepsilon_{i})_{i\in\mathbb{Z}})\cap \text{CF}^{r}(\textbf{M}).\]
  Consequently,  $\{ (f_{i})_{i\in\mathbb{Z}}\}$  is open  in $(\text{CF}^{r}(\textbf{M}),\tilde{\tau}_{str}.$
  \end{proof}
   The application 
\begin{align*} \pi_{0} : (\text{F}^{r}(\textbf{M}),\tau)&\rightarrow (\text{Diff}^{r}(M_{0},M_{1}),d^{r})\\
(f_{i})_{i\in\mathbb{Z}}& \mapsto f_{0}
\end{align*}
is continuous for $\tau\in \{\tau_{str},\tau_{prod} \}$. 
Hence, the restriction
 \begin{align*}\tilde{\pi}_{0}= \pi_{0}|_{\text{CF}^{r}(\textbf{M})} : (\text{CF}^{r}(\textbf{M}),\tilde{\tau}) \rightarrow (\text{Diff}^{r}(M_{0},M_{1}),d^{r})
\end{align*}
is continuous for 
$\tilde{\tau}\in \{\tilde{\tau}_{str},\tilde{\tau}_{prod} \}$. We can identify $(\text{Diff}^{r}(M_{0},M_{1}),d^{r}) $ with the space $(\text{Diff}^{r}(M),d^{r})$, the space consisting of diffeomorphisms on $M$ endowed with the $C^{r}$-metric obtained from the metric $d$ on $M$. 
From now on we will make use of this identification.  For a $C^{r}$-diffeomorphism $\phi:M\rightarrow M$, we denote the constant family associated to $\phi$ by $\textbf{\textit{f}}_{\phi}$. Notice that $\tilde{\pi}_{0}$ is invertible, in fact, 
\begin{align*}\tilde{\pi}_{0}^{-1} : (\text{Diff}^{r}(M),d^{r}) &\rightarrow  (\text{CF}^{r}(\textbf{M}),\tilde{\tau}) \\
\phi& \mapsto \textbf{\textit{f}}_{\phi}.
\end{align*}

Clearly, if $\tilde{\tau}=\tilde{\tau}_{str}$, then 
$\tilde{\pi}_{0}^{-1}$ is not continuous (see Proposition \ref{igualdadtopologias}). On the other hand, we have:

\begin{propo} If $\tilde{\tau}=\tilde{\tau}_{prod}$, then $\tilde{\pi}_{0}^{-1}$ is continuous.  
\end{propo}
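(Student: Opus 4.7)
The plan is to unwind definitions and exploit the fact that the product topology on $\text{F}^{r}(\textbf{M})$ constrains only finitely many coordinates. Fix $\phi_{0}\in \text{Diff}^{r}(M)$; I want to show that $\tilde{\pi}_{0}^{-1}$ is continuous at $\phi_{0}$. Take an arbitrary open neighborhood of $\textbf{\textit{f}}_{\phi_{0}}$ in $(\text{CF}^{r}(\textbf{M}),\tilde{\tau}_{prod})$; without loss of generality it is the intersection with $\text{CF}^{r}(\textbf{M})$ of a basic product-open set
\[
\mathcal{U}=\prod_{i<-j}\text{Diff}^{r}(M_{i},M_{i+1})\times\prod_{i=-j}^{j}[U_{i}]\times\prod_{i>j}\text{Diff}^{r}(M_{i},M_{i+1}),
\]
where each $U_{i}$ is open in $\text{Diff}^{r}(M_{i},M_{i+1})$ and contains $f_{i}$, the $i$-th map of $\textbf{\textit{f}}_{\phi_{0}}$.

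Under the canonical identification $\text{Diff}^{r}(M_{i},M_{i+1})\cong \text{Diff}^{r}(M)$ (each $f_{i}$ is determined by a diffeomorphism $M\to M$, and this identification is an isometry between the $C^{r}$-metrics since the underlying Riemannian structure is the same on each copy), every $f_{i}$ with $-j\le i\le j$ corresponds to $\phi_{0}$. Because $U_{i}$ is open and contains $f_{i}$, there exists $\varepsilon_{i}>0$ such that any $\psi\in \text{Diff}^{r}(M)$ with $d^{r}(\psi,\phi_{0})<\varepsilon_{i}$ corresponds, under the identification above, to an element of $U_{i}$.

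Set $\varepsilon=\min\{\varepsilon_{-j},\ldots,\varepsilon_{j}\}>0$. If $\psi\in \text{Diff}^{r}(M)$ satisfies $d^{r}(\psi,\phi_{0})<\varepsilon$, then the constant family $\textbf{\textit{f}}_{\psi}$ has, in each coordinate $-j\le i\le j$, a map that corresponds to $\psi$ and therefore lies in $U_{i}$; the unconstrained coordinates of $\mathcal{U}$ are automatically satisfied. Hence $\textbf{\textit{f}}_{\psi}\in\mathcal{U}\cap \text{CF}^{r}(\textbf{M})$, which shows
\[
\tilde{\pi}_{0}^{-1}\bigl(B_{d^{r}}(\phi_{0},\varepsilon)\bigr)\subseteq\mathcal{U}\cap \text{CF}^{r}(\textbf{M}),
\]
and continuity at $\phi_{0}$ follows.

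There is no serious obstacle; the only point that requires a brief justification is that passing from the product to the constant family costs nothing because we only need to match finitely many indices and all of them demand the same $C^{r}$ estimate on the single diffeomorphism $\phi_{0}$. This is precisely what fails for $\tilde{\tau}_{str}$, where the sequence $(\varepsilon_{i})$ over all $i\in\mathbb{Z}$ can shrink to $0$ and one cannot find a single $\varepsilon$ controlling every coordinate, as already explained in Proposition \ref{igualdadtopologias}.
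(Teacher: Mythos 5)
Your proof is correct and follows essentially the same route as the paper: both arguments reduce to the fact that a basic $\tilde{\tau}_{prod}$-open set constrains only the finitely many coordinates $-j\le i\le j$, all of which, under the identification $\text{Diff}^{r}(M_{i},M_{i+1})\cong\text{Diff}^{r}(M)$, impose conditions on the single diffeomorphism $\phi$, so that the relevant preimage is the finite intersection $\bigcap_{i=-j}^{j}U_{i}$. The paper states this preimage computation directly, while you unpack the openness of that finite intersection pointwise via metric balls; the content is the same.
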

\begin{proof} All the open subsets of  $(\text{CF}^{r}(\textbf{M}),\tilde{\tau}_{prod}) $ are union of sets with  the form \[  \mathcal{U}=\left(\prod_{i<-j}   \text{Diff}^{r}(M_{i},M_{i+1})\times \prod_{i=-j}^{j}[U_{i} ]\times \prod_{i>j}   \text{Diff}^{r}(M_{i},M_{i+1})\right)\cap \text{CF}^{r}(\textbf{M}), \]
where $U_{i}$ is an open subset of $\text{Diff}^{r}(M_{i},M_{i+1})$, for $-j\leq i\leq j.$ Notice that \[(\tilde{\pi}_{0}^{-1})^{-1}(\mathcal{U})=\tilde{\pi}_{0}(\mathcal{U}) = \bigcap _{i=-j}^{j} U_{i},\]
which is an open subset of $\text{Diff}^{r}(M)$. Thus, $\tilde{\pi}_{0}^{-1}$ is continuous.
\end{proof}

Consequently, we have:

\begin{propo}\label{propodeee}  $\textbf{H} :(\text{CF}^{r}(\textbf{M}),\tilde{\tau}_{prod})\rightarrow \mathbb{R} ,$   is continuous if, and only if, 
  $h:(\text{Diff}^{r}(M),d^{r})\rightarrow \mathbb{R}  $    is continuous.
\end{propo}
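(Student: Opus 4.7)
The plan is to observe that, once the preceding propositions are in hand, the statement reduces to a formal composition argument. The two ingredients I need are: (i) the map $\tilde{\pi}_{0}:(\text{CF}^{r}(\textbf{M}),\tilde{\tau}_{prod})\to (\text{Diff}^{r}(M),d^{r})$ is a homeomorphism, and (ii) under the identification $\tilde{\pi}_{0}^{-1}(\phi)=\textbf{\textit{f}}_{\phi}$, the two entropy functions agree, i.e.\ $\textbf{H}(\textbf{\textit{f}}_{\phi})=h(\phi)$.

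For (i), it was established earlier that $\tilde{\pi}_{0}$ is a continuous bijection (onto $\text{Diff}^{r}(M)$ via the canonical identification of $\text{Diff}^{r}(M_{0},M_{1})$ with $\text{Diff}^{r}(M)$). The proposition immediately preceding this one says exactly that $\tilde{\pi}_{0}^{-1}$ is continuous when the domain of $\tilde{\pi}_{0}$ carries $\tilde{\tau}_{prod}$. Together these give a homeomorphism, so I will simply cite these two propositions. For (ii), I will invoke equation \eqref{igualdadeentropiassd}, which was established in Section 4 and states that the $\star$-topological entropy of a constant family coincides with the topological entropy of the underlying homeomorphism; combined with Proposition \ref{igualdadeentropias} (equality of the cover and separated definitions) and Remark \ref{remarkunmap} (dropping the index $i$), this yields $\textbf{H}(\textbf{\textit{f}}_{\phi})=h(\phi)$ for every $\phi\in\text{Diff}^{r}(M)$.

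With these two ingredients, the argument is a one-line composition. Writing the relations as
\[
h=\textbf{H}\circ\tilde{\pi}_{0}^{-1}\qquad\text{and}\qquad \textbf{H}|_{\text{CF}^{r}(\textbf{M})}=h\circ\tilde{\pi}_{0},
\]
the ``only if'' direction follows because $\tilde{\pi}_{0}^{-1}$ is continuous (preceding proposition), so continuity of $\textbf{H}$ on $(\text{CF}^{r}(\textbf{M}),\tilde{\tau}_{prod})$ implies continuity of $h$ as a composition. For the ``if'' direction, continuity of $\tilde{\pi}_{0}$ on $(\text{CF}^{r}(\textbf{M}),\tilde{\tau}_{prod})$ (established earlier in the section) combined with continuity of $h$ yields continuity of $\textbf{H}|_{\text{CF}^{r}(\textbf{M})}=h\circ\tilde{\pi}_{0}$.

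There is no real obstacle: the content of the statement was absorbed into the previous two propositions establishing $\tilde{\pi}_{0}$ as a homeomorphism, and into the identification $\textbf{H}(\textbf{\textit{f}}_{\phi})=h(\phi)$. The only point worth being careful about is that the identification of $\text{Diff}^{r}(M_{0},M_{1})$ with $\text{Diff}^{r}(M)$ used in the statement is an isometry of $C^{r}$-metrics (so topological continuity is preserved under it), but this is immediate from the definitions and was already tacitly used by the author just before the proposition is stated.
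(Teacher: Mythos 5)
Your proposal is correct and follows essentially the same route as the paper: the author's proof is the one-line observation that $\textbf{H}=h\circ\tilde{\pi}_{0}$ with $\tilde{\pi}_{0}$ a homeomorphism (using the two preceding propositions and the identity $\textbf{H}(\textbf{\textit{f}}_{\phi})=h(\phi)$ from \eqref{igualdadeentropiassd}). You have merely unpacked the same composition argument into its two directions.
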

\begin{proof}It is clear, because  $\textbf{H}  =h\circ \tilde{\pi}_{0} $ and $\tilde{\pi}_{0} $ is a homeomorphism. 
\end{proof}

\begin{obs}Proposition \ref{propodeee} could be  a useful tool to show the continuity of the topological entropy  at some  $C^{r}$-diffeomorphisms: to show that  $h$ is continuous at $\phi\in \text{Diff}^{\,r}(M)$,  we could try to prove that  $\textbf{H}|_{\text{CF}^{r}(\textbf{M})}$ is continuous at $\textbf{\textit{f}}_{\phi}$. In order to prove this  fact, we have to find an open  neighborhood $\mathcal{U}\subseteq \text{Diff}^{r}(M)$ of $\phi$, such that each constant family associated to any diffeomorphism in  $\mathcal{U}$ is uniformly conjugate to $\textbf{\textit{f}}_{\phi}$. Thus, by   Theorem  \ref{invarianteentropia} and \eqref{igualdadeentropiassd}, we had that $$h( \psi) =\textbf{H}(\textbf{\textit{f}}_{\psi})= \textbf{H}(\textbf{\textit{f}}_{\phi})=h( \phi)\quad\text{ for any }\psi\in \mathcal{U}.  $$

In  \cite{Jeo3} we will prove that there exist diffeomorphisms $\phi$ and $\psi$ which are not topologically conjugate,  however $\textbf{\textit{f}}_{\phi}$ and $\textbf{\textit{f}}_{\psi}$ could be uniformly   conjugate.
\end{obs}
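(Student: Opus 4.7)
The plan is to verify the chain of equalities
$h(\psi) = \textbf{H}(\textbf{\textit{f}}_{\psi}) = \textbf{H}(\textbf{\textit{f}}_{\phi}) = h(\phi)$
asserted at the end of the observation, under the hypothesis that an open neighborhood $\mathcal{U} \subseteq \text{Diff}^{r}(M)$ of $\phi$ has already been produced in which every element yields a constant family uniformly conjugate to $\textbf{\textit{f}}_{\phi}$; and then to read off from this chain that $h$ is locally constant, hence continuous, at $\phi$. The argument is not computational but structural: it consists solely in chaining together results already proved earlier in the paper.

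First I would fix an arbitrary $\psi \in \mathcal{U}$ and, invoking the hypothesis, pick a witnessing uniform conjugacy $\textbf{\textit{h}} = (h_i)_{i\in\mathbb{Z}}$ between $\textbf{\textit{f}}_{\phi}$ and $\textbf{\textit{f}}_{\psi}$ in the sense of Definition \ref{definconjugacy}. The middle equality $\textbf{H}(\textbf{\textit{f}}_{\phi}) = \textbf{H}(\textbf{\textit{f}}_{\psi})$ drops out of Theorem \ref{invarianteentropia}, which guarantees $\textbf{H}_{i}(\textbf{\textit{f}}_{\phi}) = \textbf{H}_{i}(\textbf{\textit{f}}_{\psi})$ for every $i \in \mathbb{Z}$; by Corollary \ref{corolarioigualdad} this common value does not depend on $i$, which legitimates the index-free notation $\textbf{H}$ (as emphasized in Remark \ref{remarkunmap}). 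Next I would read off the two outer equalities from identity \eqref{igualdadeentropiassd}, which records that for any homeomorphism $\varphi : M \to M$ the non-autonomous entropy of the associated constant family equals the classical topological entropy $h(\varphi)$. Applied to $\varphi = \phi$ and to $\varphi = \psi$ this gives $\textbf{H}(\textbf{\textit{f}}_{\phi}) = h(\phi)$ and $\textbf{H}(\textbf{\textit{f}}_{\psi}) = h(\psi)$, respectively. Concatenating the three identities produces the asserted chain for every $\psi \in \mathcal{U}$, so that $h \equiv h(\phi)$ on $\mathcal{U}$ and hence $h$ is locally constant, a fortiori continuous, at $\phi$.

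The main obstacle sits outside this chain of deductions: it is the existence of the neighborhood $\mathcal{U}$. Uniform conjugacy of constant families is strictly weaker than topological conjugacy of the underlying diffeomorphisms (as illustrated by the example at the close of Section 5 and pursued further in \cite{Jeo3}), so producing such a $\mathcal{U}$ around a given $\phi$ is a genuine rigidity statement about the uniform-conjugacy class of $\textbf{\textit{f}}_{\phi}$ inside $\text{CF}^{r}(\textbf{M})$, rather than a formal consequence of the material already developed. The observation therefore does not supply such a $\mathcal{U}$; it only records the clean implication that, whenever one can be found, $h$ inherits continuity at $\phi$ from the uniform-conjugacy invariance of Theorem \ref{invarianteentropia} together with the constant-family identity \eqref{igualdadeentropiassd}.
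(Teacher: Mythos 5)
Your proposal is correct and follows exactly the route the remark itself takes: the middle equality from Theorem \ref{invarianteentropia}, the outer equalities from \eqref{igualdadeentropiassd}, and the (accurate) observation that the existence of the neighborhood $\mathcal{U}$ is a hypothesis rather than something established here. Nothing is missing relative to the paper's own justification.
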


Finally, we will prove the continuity of $\textbf{H}:(\text{F}^{r}(\textbf{M}), \tau_{str})\rightarrow \mathbb{R}\cup\{+\infty\}$ for any $ r\geq 1$.  It is sufficient to prove the case when    $ r=1$. 

\medskip

Remember we are supposing that $M$ is a compact Riemannian manifold   with Riemannian norm $\Vert\cdot\Vert$, which induces a metric $d$ on $M$, and then  we   consider  the metric $\textbf{d}$ on $\textbf{M}$ as in \eqref{metricatotal1}. Let $\varrho>0$    be such that, for each $x\in M$, the exponential application   \[\text{exp}_{x}:B(0_{x},\varrho) \rightarrow B(x,\varrho) \] is a diffeomorphism and \( \Vert v\Vert =d(\text{exp}_{x}(v),x),\)  for all  \(v\in B(0_{x},\varrho),
\)
 that is, $\varrho$ is the  \textit{injectivity radius} of $M$.\footnote{Here, $0_{x}$ is the zero vector in $T_{x}M$, the tangent space of $M$ at $x$.} We will suppose that $\varrho<1/2$.

\medskip

We will fix $\textbf{\textit{f}}=(f_{i})_{i\in\mathbb{Z}}\in \text{F}^{1}(\textbf{M}).$   
For $\delta>0$ and $r=0,1$, set 
\[ D^{r}(I_{i},\delta)=\{h\in \text{Hom} (M_{i},M_{i}): h \text{ is a } C^{r}\text{-diffeomorphism and }d^{r}(h ,I_{i})\leq \delta\}\]
\[\text{and }D^{1}(f_{i},\delta)=\{g\in \text{Diff} ^{1}(M_{i},M_{i+1}):  d^{1}(g ,f_{i})\leq \delta\}.\]

  The closure of $D^{1}(I_{i},\delta)$ on $D^{0}(I_{i},\delta)$ will be denoted by   $\overline{D^{1}(I_{i},\delta)}.$ 

 \begin{lem}\label{escolhari} There exist two sequences $(r_{i})_{i\geq 0}$ and $(\delta_{i})_{i\geq 0}$, with $r_{i}\rightarrow 0$ as $i\rightarrow +\infty$, such that, for each $ g\in  D^{1}(f_{i},\delta_{i})$, the map 
 \begin{align*} \tilde{G}_{i+1}:D^{r}(I_{i+1},r_{i+1})&\rightarrow  D^{r}(I_{i},r_{i} )\\
 h&\mapsto g ^{-1}h f_{i} 
 \end{align*}
 is well-defined for each $i\geq1$. 
 \end{lem}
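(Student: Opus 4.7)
The plan is to construct $(r_i)$ and $(\delta_i)$ inductively, exploiting the identity $\tilde{G}_{i+1}(I_{i+1}) = g^{-1}\circ f_i$, which equals $I_i$ precisely when $g = f_i$. Thus the lemma amounts to a quantitative joint continuity of the operation $(g,h)\mapsto g^{-1}\circ h\circ f_i$ at the base point $(g,h) = (f_i, I_{i+1})$, simultaneously for $r=0$ and $r=1$.

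For each fixed $i\geq 1$, I would first derive a modulus-of-continuity estimate of the form
\[
d^r(g^{-1} h f_i,\, I_i) \;\leq\; \Phi_i(d^1(g,f_i),\, d^r(h, I_{i+1})),
\]
with $\Phi_i(s,t)\to 0$ as $(s,t)\to(0,0)$. For $r=0$, writing $I_i = f_i^{-1}\circ f_i$ and applying the triangle inequality gives
\[
d(g^{-1} h f_i(x),\, f_i^{-1} f_i(x)) \leq \mathrm{Lip}(g^{-1})\cdot d^0(h, I_{i+1}) + d^0(g^{-1}, f_i^{-1}),
\]
where $\mathrm{Lip}(g^{-1})$ is controlled (for $\delta_i$ small) by $\|Df_i^{-1}\|_\infty$, and $d^0(g^{-1}, f_i^{-1})\to 0$ as $g\to f_i$ in $C^1$. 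For $r=1$, I would differentiate the composition,
\[
D(g^{-1} h f_i)(x) = Dg^{-1}(h f_i(x))\circ Dh(f_i(x))\circ Df_i(x),
\]
and compare it with $DI_i(x) = Df_i^{-1}(f_i(x))\circ Df_i(x)$; the resulting cascade of triangle inequalities uses $\|Dh - I\|_\infty \leq r_{i+1}$, the $C^1$ closeness of $g^{-1}$ to $f_i^{-1}$ (controlled by $\delta_i$ via the inverse function theorem), and the modulus of continuity of $Df_i$, which is finite and tends to $0$ at $0$ since $f_i\in C^1$ and $M$ is compact.

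With $\Phi_i$ in hand, the sequences are built inductively. Set $r_0 = \delta_0 = r_1 = 1$ (the lemma imposes no condition involving $r_0$ or $\delta_0$). For $i\geq 1$, given $r_i$, I would choose $r_{i+1}$ with $r_{i+1}\leq \min\{r_i/2,\, 1/(i+1)\}$ and then $\delta_i>0$ so small that $\Phi_i(\delta_i, r_{i+1})\leq r_i$. The first bound forces $r_i\to 0$ as $i\to+\infty$, and the second guarantees that, for every $g\in D^1(f_i,\delta_i)$ and every $h\in D^r(I_{i+1}, r_{i+1})$, the composition $g^{-1} h f_i$ is a $C^r$-diffeomorphism of $M_i$ (immediate, since composition and inversion of $C^r$-diffeomorphisms are $C^r$) lying in $D^r(I_i, r_i)$. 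Hence $\tilde{G}_{i+1}$ is well-defined as claimed, for both $r=0$ and $r=1$.

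The main technical obstacle lies in the $C^1$ estimate, because $D(g^{-1} h f_i)(x)$ is a linear map $T_x M_i \to T_{g^{-1} h f_i(x)} M_i$, and comparing it with the identity of $T_x M_i$ requires the Riemannian structure — concretely, parallel transport along the short geodesic joining $x$ to $g^{-1} h f_i(x)$, which is well defined inside the injectivity radius $\varrho$. Carrying out the chain of triangle inequalities at the derivative level is routine but must be done carefully, and the constants it produces depend on $f_i$ and may blow up with $i$; this growth is absorbed into the freedom to shrink $r_{i+1}$ and $\delta_i$ as quickly as necessary, without compromising the requirement $r_i\to 0$.
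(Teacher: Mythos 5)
Your proposal is correct and follows essentially the same route as the paper: decompose through the intermediate map $g^{-1}\circ f_i$ via the triangle inequality, observe that closeness of $h$ to the identity and of $g$ to $f_i$ controls each term, and then build $(r_i)$ and $(\delta_i)$ inductively with $r_{i+1}<r_i/2$ forcing $r_i\rightarrow 0$. The paper leaves the $C^0$ and $C^1$ continuity estimates implicit where you spell them out (Lipschitz control of $g^{-1}$, chain rule, parallel transport); the only point to watch is that $r_{i+1}$ and $\delta_i$ must be chosen jointly small enough that your modulus $\Phi_i$ falls below $r_i$, since fixing $r_{i+1}$ first could make the condition on $\delta_i$ alone unattainable.
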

 \begin{proof}
  Notice that if  $g\in \text{Diff}^{1}(M_{i},M_{i+1})$ and $h\in \text{Diff}^{1}(M_{i+1},M_{i+1}),$ we have
\[d^{1}(g ^{-1} h f_{i} , I_{i})\leq d^{1}(g^{-1}   h f_{i}  , g ^{-1}   f_{i}  )+  d^{1}(g^{-1}    f_{i}  , I_{i})\quad\text{ for }i\geq0 .\]
If $h$ is $C^{1}$-close to $I_{i+1}$, then $g ^{-1} h f_{i}$ is $C^{1}$-close to $g ^{-1}  f_{i}$ and if $g$ is $C^{1}$-close to $f_{i}$, then $g^{-1}    f_{i} $ is $C^{1}$-close to $I_{i}$.   Fix $r_{0}\in(0,\varrho/4)$. There exist $r_{1}\in (0,r_{0}/2)$ and $\delta_{0}>0$ such that, if $h\in D^{1}(I_{1},r_{1})$ and $g \in D^{1}(f_{0},\delta_{0})$,  then $g^{-1}hf_{0}\in D^{1}(I_{0},r_{0})$.  Take $r_{2}\in (0,r_{1}/2)$ and $\delta_{1}>0$ such that, if $h\in D^{1}(I_{2},r_{2})$ and $g \in D^{1}(f_{1},\delta_{1})$,  then $g^{-1}hf_{1}\in D^{1}(I_{1},r_{1})$. Hence, inductively, we can build two sequences $(r_{i})_{i\geq 0}$ and $(\delta_{i})_{i\geq 0}$, with $r_{i}\in (0,r_{i-1}/2)$ for each $i\geq 1$, such that $g^{-1}hf_{i}\in D^{1}(I_{i},r_{i})$, which proves the lemma.
\end{proof}

Analogously,  we can find a  sequence of positive numbers $(\delta_{i})_{i\leq 0}$ and $(r_{i})_{i\leq 0}$, with $r_{i}\rightarrow 0$ as $i\rightarrow -\infty$,   such that for each $g\in  D^{1}(f_{i-1}, \delta_{i-1})$, the map 
 \begin{align*} \hat{G}_{i-1}: D^{r}(I_{i-1},r_{i-1})&\rightarrow  D^{r}(I_{i},r_{i})\\
 h&\mapsto f_{i-1} h g ^{-1} 
 \end{align*}
 is well-defined for each $i\leq 0$.

\begin{lem}\label{existenciapuntofijo} There exist two sequences   $\tilde{\textbf{\textit{h}}}=(\tilde{h}_{i})_{i\geq0}\in  \prod_{i\geq0}D^{0}(I_{i},r_{i})$ and $\hat{\textbf{\textit{h}}}=(\hat{h}_{i})_{i\leq0}\in  \prod_{i\leq0}D^{0}(I_{i},r_{i})  $ such that 
\[\tilde{G}_{i+1}\tilde{h}_{i+1}=\tilde{h}_{i}\text{ for all }i\geq 0\quad \text{and}\quad \hat{G}_{i-1}\hat{h}_{i-1}=\hat{h}_{i}\text{ for all }i\leq 0 .\] 
\end{lem}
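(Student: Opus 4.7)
My plan is to prove this by an inverse-limit compactness argument on the spaces of near-identity self-maps; I describe the forward direction, the backward one being entirely symmetric.

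First, I would set $Y_i := \overline{D^1(I_i, r_i)}$ for each $i \geq 0$ and establish that $Y_i$ is compact in the $C^0$-topology, and that $\tilde{G}_{i+1}$ extends continuously to a map $Y_{i+1} \to Y_i$. Compactness of $Y_i$ follows from Arzel\`a--Ascoli: any $h \in D^1(I_i, r_i)$ has differential within $r_i \leq r_0$ of the identity, so $D^1(I_i, r_i)$ is a uniformly Lipschitz, hence equicontinuous, family of maps into the compact manifold $M_i$, and its $C^0$-closure $Y_i$ is therefore compact and contained in $D^0(I_i, r_i)$ (choosing $r_0$ small the uniform bi-Lipschitz property persists under $C^0$-limits, so $Y_i$ indeed consists of homeomorphisms). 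The extension of $\tilde{G}_{i+1}$ is continuous because left- and right-composition with the fixed continuous maps $g_i^{-1}$ and $f_i$ are $C^0$-continuous on spaces of continuous maps between compact metric spaces; and it maps into $Y_i$ because any $h \in Y_{i+1}$ is a $C^0$-limit of some $h_n \in D^1(I_{i+1}, r_{i+1})$, whence Lemma \ref{escolhari} places $g_i^{-1} h_n f_i \in D^1(I_i, r_i)$, and the limit $g_i^{-1} h f_i$ therefore lies in $Y_i$.

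Second, I would locate $\tilde{\textbf{\textit{h}}}$ as an element of the inverse limit of the system $\{Y_i, \tilde{G}_{i+1}\}_{i \geq 0}$. Explicitly, for each $N \geq 0$ define $\tilde{h}^{(N)} \in \prod_{i \geq 0} Y_i$ by setting $\tilde{h}_N^{(N)} = I_N$, $\tilde{h}_i^{(N)} = \tilde{G}_{i+1}(\tilde{h}_{i+1}^{(N)})$ for $0 \leq i < N$, and $\tilde{h}_i^{(N)} = I_i$ for $i > N$. By Tychonoff, the product $\prod_{i \geq 0} Y_i$ is compact, so a subnet of $(\tilde{h}^{(N)})_N$ converges coordinatewise to some $(\tilde{h}_i)_{i \geq 0}$. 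For each fixed $i$, the relation $\tilde{h}_i^{(N)} = \tilde{G}_{i+1}(\tilde{h}_{i+1}^{(N)})$ holds for all $N > i$, and passing to the limit using continuity of $\tilde{G}_{i+1}$ yields $\tilde{h}_i = \tilde{G}_{i+1}(\tilde{h}_{i+1})$, as required. The sequence $\hat{\textbf{\textit{h}}}$ is produced identically, running $N \to -\infty$ and replacing $\tilde{G}_{i+1}$ by $\hat{G}_{i-1}$.

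The main technical obstacle is the compactness step: verifying uniformly in $i$ that the $C^1$-ball $D^1(I_i, r_i)$ yields an equicontinuous family on the Riemannian manifold $M$ to which Arzel\`a--Ascoli applies, and that its $C^0$-closure remains in the homeomorphism group so that $\tilde{G}_{i+1}$ admits a well-defined continuous extension to $Y_{i+1}$. Once this is secured, the inverse-limit / Tychonoff step is routine, and is precisely what is needed because the radii $r_i$ shrink to zero, preventing any single-space fixed-point argument.
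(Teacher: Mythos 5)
Your proof is correct, and it rests on the same two pillars as the paper's argument: the approximants obtained by conjugating the identity down from level $N$ (your $\tilde{h}_{0}^{(N)}$ is exactly the paper's $h^{N}=\tilde{G}_{1}\circ\cdots\circ\tilde{G}_{N}(I_{N})$) and the $C^{0}$-precompactness of the $C^{1}$-balls $D^{1}(I_{i},r_{i})$ via Arzel\`a--Ascoli. Where you genuinely differ is in how the full sequence is extracted. The paper passes to a convergent subsequence only at level $0$, obtaining $\tilde{h}_{0}$, and then \emph{pulls back}, setting $\tilde{h}_{i}:=\tilde{G}_{i}^{-1}\circ\cdots\circ\tilde{G}_{1}^{-1}(\tilde{h}_{0})$; this forces it to argue that $\tilde{G}_{1}$ is a continuous bijection onto its image with continuous inverse, so that $\tilde{G}_{1}\bigl(\overline{D^{1}(I_{1},r_{1})}\bigr)=\overline{\tilde{G}_{1}(D^{1}(I_{1},r_{1}))}$ and the preimage of $\tilde{h}_{0}$ lands in $\overline{D^{1}(I_{1},r_{1})}\subseteq D^{0}(I_{1},r_{1})$. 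You instead take limits at all levels simultaneously inside $\prod_{i}Y_{i}$ (Tychonoff, or simply a diagonal subsequence, since each $Y_{i}$ is compact metrizable) and verify the compatibility relations $\tilde{h}_{i}=\tilde{G}_{i+1}(\tilde{h}_{i+1})$ using only continuity of the \emph{forward} maps. This buys you independence from any property of $\tilde{G}_{i+1}^{-1}$, at the cost of having to check that $\tilde{G}_{i+1}$ extends continuously to $Y_{i+1}$ and maps it into $Y_{i}$, which you do correctly via Lemma \ref{escolhari}. On the one point both arguments treat lightly --- that the $C^{0}$-limits are actually homeomorphisms, i.e.\ genuinely lie in $D^{0}(I_{i},r_{i})$ --- your remark that the uniform bi-Lipschitz control persists under $C^{0}$-limits is essentially the paper's own justification, which it defers to the discussion immediately following the lemma using the Lipschitz estimate in exponential charts.
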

\begin{proof}
For each $i> 0$, let $h^{i}=\tilde{G}_{1}\circ\cdots\circ \tilde{G}_{i}(I_{i}).$ 
It follows from Lemma \ref{escolhari}   that $h^{i}$ belongs to $D^{1}(I_{0},r_{0})$.  
 Consequently, the sequence $(h^{i})_{i\geq 0}$ is equicontinuous, because each $h^{i}$ is  $C^{1}$ and the sequence has    uniformly bounded derivative.    Hence, there  exist   a subsequence $i_{m}\rightarrow \infty$ and $\tilde{h}_{0}\in D^{0}(I_{0},r_{0})$ such that $h^{i_{m}}\rightarrow \tilde{h}_{0}$ as  $m\rightarrow  \infty$. Notice that   $G_{1}$ is invertible and both $G_{1}$ and ${G _{1}} ^{-1}$ are continuous. Consequently, 
 \[ G_{1}(\overline{D^{1}(I_{1},r_{1})})=\overline{G_{1}(D^{1}(I_{1},r_{1})) } .\]
 Since $\tilde{h}_{0}\in \overline{G_{1}(D^{1}(I_{1},r_{1})) }$, we have   
  \[ \tilde{h}_{1}=\tilde{G}_{1}^{-1}(\tilde{h}_{0})\in \overline{D^{1}(I_{1},r_{1})}\subseteq D^{0}(I_{1},r_{1}).\] 
  Inductively, we can prove  
  \[ \tilde{h}_{i}=\tilde{G}_{i}^{-1}\circ\cdots\circ\tilde{G}_{1}^{-1}(\tilde{h}_{0})\in D^{0}(I_{i},r_{i})\quad\text{for each }i\geq1.\]
  Take $\tilde{\textbf{\textit{h}}}=(\tilde{h}_{i})_{i\geq0}.$ It is clear that $\tilde{G}_{i+1}\tilde{h}_{i+1}=\tilde{h}_{i}$  for all $i\geq 0$.
  
The proof of the existence of $\hat{\textbf{\textit{h}}}$ is analogous and therefore we omit it.  
\end{proof}

Notice that $\tilde{h}_{0}$ is a limit of 
$C^{1}$-diffeomorphisms, which are $\varrho/4$-close to $I_{0}$ in the $C^{1}$-topology. Consequently, for each $x\in M_{0}$, \[ [\text{exp}_{x}^{-1}\circ\tilde{h}_{0}\circ \text{exp}_{x} - \text{exp}_{x}^{-1}\circ I_{0} \circ\text{exp}_{x}]|_{B(0_{x},\varrho)}\]
is $\varrho/4$-Lipschitz. 
Since  $\varrho<1$, we can prove that $\tilde{h}_{0}$ is injective. 
 Furthermore, for each $i\geq 0 $ and $x\in M_{i}$, we have \[\textbf{d}(\tilde{h}_{i} ^{-1}(x),x)=\textbf{d}(\tilde{h}_{i} ^{-1}(x),\tilde{h}_{i}   \tilde{h}_{i} ^{-1}(x))=\textbf{d}(y,\tilde{h}_{i}  (y)),\]
where $y={\tilde{h}_{i}} ^{-1}(x)$. Hence $d^{0}(\tilde{h}_{i},I_{i})=d^{0}(\tilde{h}_{i}^{-1},I_{i})$ for each $i\geq0$.

Analogously, we can prove that   $\hat{h}_{i}$ is invertible and $d^{0}(\hat{h}_{i},I_{i})=d^{0}(\hat{h}_{i}^{-1},I_{i})$ for each $i\leq 0$.

\begin{lem}\label{uniformlycontinuous} The families  $ (\tilde{h}_{i})_{i\geq0} $, $ (\tilde{h}_{i}^{-1})_{i\geq0} $,    $ (\hat{h}_{i})_{i\leq0}$ and $ (\hat{h}_{i}^{-1})_{i\leq0}$ are equicontinuous. 
\end{lem}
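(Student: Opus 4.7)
The plan is to leverage the fact that $\tilde{h}_{i}\in \overline{D^{1}(I_{i},r_{i})}$ with the uniform bound $r_{i}\leq r_{0}<\varrho/4$, and read off a Lipschitz estimate from this in exponential coordinates. Since the constant $r_{0}$ is independent of $i$, this automatically delivers the uniformity in $i$ that equicontinuity demands, bypassing any further analysis of the recursion $\tilde{G}_{i+1}\tilde{h}_{i+1}=\tilde{h}_{i}$.

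The key step is to generalize the observation made just before the lemma for $\tilde{h}_{0}$: each $\tilde{h}_{i}$ is the uniform limit of $C^{1}$-diffeomorphisms in $D^{1}(I_{i},r_{i})$, so in exponential coordinates around any $x\in M_{i}$ the map $\exp_{x}^{-1}\circ \tilde{h}_{i}\circ \exp_{x}-\mathrm{id}$ is $r_{i}$-Lipschitz on $B(0_{x},\varrho)$, by the classical fact that a pointwise limit of $L$-Lipschitz maps is $L$-Lipschitz. Since $r_{i}\leq r_{0}<\varrho/4<1$, this yields in local coordinates that $\tilde{h}_{i}$ is $(1+r_{0})$-Lipschitz and bi-Lipschitz from below with constant $1-r_{0}>0$ on $B(0_{x},\varrho)$. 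Translating back through $\exp_{x}$, for any $x,y\in M_{i}$ with $d(x,y)<\varrho/2$ both endpoints and their images stay inside $B(x,\varrho)$ (because $\tilde{h}_{i}$ is $r_{0}$-close to $I_{i}$), and one obtains
\begin{equation*}
(1-r_{0})\,d(x,y)\;\leq\; d(\tilde{h}_{i}(x),\tilde{h}_{i}(y))\;\leq\; (1+r_{0})\,d(x,y).
\end{equation*}

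Given $\varepsilon>0$, I would then set $\delta=\min\{\varrho/2,\,\varepsilon/(1+r_{0})\}$, which is independent of $i$: if $\textbf{d}(x,y)<\delta$ for some $x,y\in\textbf{M}$, then $x$ and $y$ lie in a common component $M_{i}$ (points in distinct components are at $\textbf{d}$-distance $1>\delta$) and the upper bound above gives $\textbf{d}(\tilde{h}_{i}(x),\tilde{h}_{i}(y))<\varepsilon$, proving equicontinuity of $(\tilde{h}_{i})_{i\geq 0}$. For $(\tilde{h}_{i}^{-1})_{i\geq 0}$ the lower bi-Lipschitz estimate converts to a uniform Lipschitz bound $1/(1-r_{0})$ for $\tilde{h}_{i}^{-1}$, valid on a ball of radius at least $\varrho/2-r_{0}>0$ around each $\tilde{h}_{i}(x)$, once one verifies $\tilde{h}_{i}(B(x,\varrho/2))\supseteq B(\tilde{h}_{i}(x),\varrho/2-r_{0})$; this follows by combining the injectivity of $\tilde{h}_{i}$ noted in the remark preceding the lemma with a standard topological degree argument on the small ball in an exponential chart. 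The families $(\hat{h}_{i})_{i\leq 0}$ and $(\hat{h}_{i}^{-1})_{i\leq 0}$ are handled by the verbatim symmetric argument using the companion sequence $(r_{i})_{i\leq 0}$.

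The delicate point I would spend most care on is the transfer of the local lower bi-Lipschitz bound into a genuine global equicontinuity modulus for $\tilde{h}_{i}^{-1}$; everything else is a mechanical consequence of the uniform $C^{1}$-bound $r_{i}\leq r_{0}$, the smallness of $r_{0}$ relative to the injectivity radius $\varrho$, and the elementary observation that Lipschitz constants survive uniform limits.
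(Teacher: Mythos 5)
Your argument is correct in outline, but it is a genuinely different proof from the one in the paper, and it carries one debt that the paper's route avoids. The paper proves equicontinuity using only the $C^{0}$ information $\tilde{h}_{i},\tilde{h}_{i}^{-1}\in D^{0}(I_{i},r_{i})$ together with the decay $r_{i}\rightarrow 0$ from Lemma \ref{escolhari}: for all $i$ beyond some $k$ the maps are $\varepsilon/3$-close to the identity in $d^{0}$, so the triangle inequality through $I_{i}$ gives the modulus $\delta=\varepsilon/3$ outright, and the finitely many remaining indices are handled by the uniform continuity of each individual $\tilde{h}_{i}$ on the compact $M_{i}$ --- no Lipschitz estimates, no exponential charts, no degree theory. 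You instead make no use of $r_{i}\rightarrow 0$ and extract a uniform bi-Lipschitz bound from the $C^{1}$ control $r_{i}\leq r_{0}<\varrho/4$. That buys a stronger conclusion (a uniform Lipschitz modulus that would survive even if $r_{i}$ did not tend to zero), but it costs you three things. First, $\exp_{x}$ is only a radial isometry, so your constants $1\pm r_{0}$ should really be $1\pm Cr_{0}$ for a curvature-dependent $C$; harmless, since only uniformity in $i$ matters, but it should be said. Second, the inverse maps require your covering/degree argument, whereas the paper gets them for free from the identity $d^{0}(\tilde{h}_{i}^{-1},I_{i})=d^{0}(\tilde{h}_{i},I_{i})$ established just before the lemma. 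Third, and most importantly, your whole estimate rests on the premise that every $\tilde{h}_{i}$ lies in $\overline{D^{1}(I_{i},r_{i})}$; Lemma \ref{existenciapuntofijo} asserts only $\tilde{h}_{i}\in D^{0}(I_{i},r_{i})$ for $i\geq 2$, and the Lipschitz observation preceding the present lemma is made only for $\tilde{h}_{0}$. The stronger membership does follow by carrying the induction of Lemma \ref{existenciapuntofijo} one step further (each approximant $\tilde{G}_{1}^{-1}(h^{i_{m}})$ lies in $\tilde{G}_{2}(D^{1}(I_{2},r_{2}))$ for $i_{m}\geq 2$, and so on), but you must prove it rather than assume it. With those points patched your proof is sound; the paper's is shorter because it never needs more than the $C^{0}$ estimates.
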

\begin{proof}Let $\varepsilon>0$. Since $\tilde{h}_{i}, \tilde{h}_{i}^{-1}\in D^{0}(I_{i},r_{i})$ and $r_{i}\rightarrow 0$ as $i\rightarrow+\infty,$ there exists $k>0$ such that, for each $i>k$, 
\[\max\{d^{0}(\tilde{h}_{i}, I_{i}), d^{0}(\tilde{h}_{i}^{-1}, I_{i})\}<\varepsilon/3 .\]
Hence, if $i<k$ and  $x,y\in M_{i}$ with $\textbf{d}(x,y)<\varepsilon/3$, then 
\[\textbf{d}(\tilde{h}_{i}(x),\tilde{h}_{i}(y))\leq \textbf{d}(\tilde{h}_{i}(x),I_{i}(x))+\textbf{d}(I_{i}(x),I_{i}(y))+\textbf{d}(I_{i}(y),\tilde{h}_{i}(y))<\varepsilon\]
and 
\[\textbf{d}(\tilde{h}_{i}^{-1}(x),\tilde{h}_{i}^{-1}(y))\leq \textbf{d}(\tilde{h}_{i}^{-1}(x),I_{i}(x))+\textbf{d}(I_{i}(x),I_{i}(y))+\textbf{d}(I_{i}(y),\tilde{h}_{i}^{-1}(y))<\varepsilon.\]

On the other hand, it is clear that there exists $\delta\in (0,\varepsilon/3)$ such that, if $0\leq i\leq k$, and $x,y\in M_{i} $ with $\textbf{d}(x,y)<\delta$, then \[\max\{\textbf{d}(\tilde{h}_{i}(x),\tilde{h}_{i}(y)),\textbf{d}(\tilde{h}_{i}(x)^{-1},\tilde{h}_{i}^{-1}(y))\}<\varepsilon. \]

The facts above prove that for each $i\geq 0$, if $x,y\in M_{i}$ and $\textbf{d}(x,y)<\delta$, then \[\max\{\textbf{d}(\tilde{h}_{i}(x),\tilde{h}_{i}(y)),\textbf{d}(\tilde{h}_{i}^{-1}(x),\tilde{h}_{i}^{-1}(y))\}<\varepsilon. \] 
Consequently,  $ (\tilde{h}_{i})_{i\geq0} $ and  $ (\tilde{h}_{i}^{-1})_{i\geq0} $  are equicontinuous. Analogously we can prove that     $ (\hat{h}_{i})_{i\leq0}$ and $ (\hat{h}_{i}^{-1})_{i\leq0}$ are equicontinuous.
\end{proof}

Finally, we have:
\begin{teo}\label{teoprinc} For all  $r\geq 1$, 
\[\textbf{H}:(\text{F}^{r}(\textbf{M}),\tau_{str})\rightarrow \mathbb{R}\cup \{+\infty\}  \quad\text{and}\quad\textbf{H}^{(-1)}:(\text{F}^{r}(\textbf{M}),\tau_{str})\rightarrow \mathbb{R}\cup \{+\infty\} \]
are  locally  constants. 
\end{teo}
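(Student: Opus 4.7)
The plan is to exhibit, for each $\textbf{\textit{f}}\in\text{F}^{1}(\textbf{M})$, a strong basic neighborhood $\mathcal{U}$ on which every $\textbf{\textit{g}}$ is both positively and negatively uniformly conjugate to $\textbf{\textit{f}}$. The remark immediately after Theorem \ref{invarianteentropia} (applied to the positive conjugacy), its $\textbf{H}^{(-1)}$-analogue from Remark \ref{obsentropiainversa} (applied to the negative conjugacy), and Corollary \ref{corolarioigualdad} (independence of the base index) will then force both $\textbf{H}$ and $\textbf{H}^{(-1)}$ to be constant on $\mathcal{U}$. The reduction from $r\geq 2$ to $r=1$ is immediate because the $C^{r}$-strong topology is finer than the $C^{1}$-strong topology on $\text{F}^{r}(\textbf{M})$, so the $C^{1}$-strong neighborhood we produce is automatically $C^{r}$-strongly open.

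To build $\mathcal{U}$, take the sequences $(\delta_{i})_{i\geq 0},(r_{i})_{i\geq 0}$ from Lemma \ref{escolhari} and the analogous $(\delta_{i})_{i\leq 0},(r_{i})_{i\leq 0}$ for the past, and set $\mathcal{U}=B^{1}(\textbf{\textit{f}},(\delta_{i})_{i\in\mathbb{Z}})$. For any $\textbf{\textit{g}}=(g_{i})_{i\in\mathbb{Z}}\in\mathcal{U}$ we have $g_{i}\in D^{1}(f_{i},\delta_{i})$ for every $i$, so Lemma \ref{existenciapuntofijo} produces $(\tilde{h}_{i})_{i\geq 0}$ and $(\hat{h}_{i})_{i\leq 0}$ with $\tilde{h}_{i},\hat{h}_{i}\in D^{0}(I_{i},r_{i})$ and $r_{i}\to 0$ as $|i|\to\infty$. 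The discussion following that lemma together with a standard degree argument ensures each $\tilde{h}_{i}$ and $\hat{h}_{i}$ is a homeomorphism of $M_{i}$ (surjectivity following from $C^{0}$-proximity to the identity on the compact manifold $M$), and Lemma \ref{uniformlycontinuous} gives equicontinuity of these sequences together with their inverses.

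It remains to translate the fixed-point identities into conjugacies. Unpacking $\tilde{G}_{i+1}(\tilde{h}_{i+1})=\tilde{h}_{i}$ gives $g_{i}^{-1}\tilde{h}_{i+1}f_{i}=\tilde{h}_{i}$, that is, \[\tilde{h}_{i+1}\circ f_{i}=g_{i}\circ\tilde{h}_{i}\quad\text{for all }i\geq 0,\] so $(\tilde{h}_{i})_{i\geq 0}$ is a positive uniform conjugacy from $(f_{i})_{i\geq 0}$ to $(g_{i})_{i\geq 0}$. Similarly $\hat{G}_{i-1}(\hat{h}_{i-1})=\hat{h}_{i}$ gives $f_{i-1}\hat{h}_{i-1}=\hat{h}_{i}g_{i-1}$ for $i\leq 0$, and setting $k_{i}=\hat{h}_{i}^{-1}$ one obtains $k_{i+1}\circ f_{i}=g_{i}\circ k_{i}$ for $i\leq -1$, so $(k_{i})_{i\leq 0}$ is a negative uniform conjugacy. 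Invoking the remark after Theorem \ref{invarianteentropia} together with Corollary \ref{corolarioigualdad} yields $\textbf{H}(\textbf{\textit{g}})=\textbf{H}(\textbf{\textit{f}})$, while the $\textbf{H}^{(-1)}$-analogue (Remark \ref{obsentropiainversa}) applied to the negative conjugacy yields $\textbf{H}^{(-1)}(\textbf{\textit{g}})=\textbf{H}^{(-1)}(\textbf{\textit{f}})$, proving local constancy.

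The bulk of the work has already been absorbed into Lemmas \ref{escolhari}--\ref{uniformlycontinuous}; what remains is essentially algebraic bookkeeping plus the verification that the $C^{0}$-limits $\tilde{h}_{i},\hat{h}_{i}$ are honest homeomorphisms of each component. This last point is the most delicate piece but is handled by a standard small-perturbation-of-identity argument on the compact manifold $M$, since every $\tilde{h}_{i}$ and $\hat{h}_{i}$ stays within $r_{0}<\varrho/4<1/2$ of the identity.
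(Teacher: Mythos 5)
Your proposal follows essentially the same route as the paper: choose the strong basic neighborhood determined by the sequences from Lemma \ref{escolhari}, use Lemma \ref{existenciapuntofijo} to produce the limit maps $\tilde{h}_{i},\hat{h}_{i}$ whose fixed-point identities unpack into positive and negative uniform conjugacies, apply Lemma \ref{uniformlycontinuous} for equicontinuity, and conclude via Theorem \ref{invarianteentropia}. Your version is correct and in fact slightly more careful than the paper's in spelling out the conjugacy relations and in using $(\delta_{i})$ rather than $(r_{i})$ as the radii of the neighborhood, which is what Lemma \ref{escolhari} actually requires.
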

\begin{proof} Let $\textbf{\textit{f}}\in \text{F}^{r}(\textbf{M})$. If follows from Lemmas \ref{existenciapuntofijo} and \ref{uniformlycontinuous} that there exists a strong basic neighborhood $B^{1}(\textbf{\textit{f}}, (r_{i})_{i\in\mathbb{Z}})$ such that  every  $\textbf{\textit{g}}\in B^{1}(\textbf{\textit{f}}, (r_{i})_{i\in\mathbb{Z}})$ is positively and negatively uniformly conjugate to $\textbf{\textit{f}}$. Thus, from  Theorem \ref{invarianteentropia} we have $\textbf{H}(\textbf{\textit{g}})=\textbf{H}(\textbf{\textit{f}})$ and $\textbf{H}^{(-1)}(\textbf{\textit{g}})=\textbf{H}^{(-1)}(\textbf{\textit{f}})$ for all $\textbf{\textit{g}}\in B^{1}(\textbf{\textit{f}}, (r_{i})_{i\in\mathbb{Z}})$, which proves the theorem. 
\end{proof}

 The author would like to thank the instituitions  Universidade de S\~ao Paulo (USP) and Instituto de Matem\'atica Pura e Aplicada (IMPA) and the agencies CAPES and CNPq for their   hospitality and support  during the course of the writing.

\end{document}